\documentclass[12pt,reqno,a4paper]{amsart}


\usepackage{amsmath,amsfonts,amsthm,amssymb,amsxtra,enumerate,mathtools,mathabx}
\usepackage[normalem]{ulem}
\usepackage[utf8]{inputenc}
\usepackage[T1]{fontenc}
\usepackage{lmodern}
\usepackage{bbm}
\usepackage{mathrsfs}
\usepackage{csquotes}
\usepackage{enumerate}
\usepackage{comment}
\usepackage{placeins}
\usepackage{pgfplots}
\pgfplotsset{compat=newest}
\usepgfplotslibrary{fillbetween}
\usepackage{tikz}
\usepackage{enumitem}

\setlength{\marginparwidth}{0.8in}



\setlength{\voffset}{-.7truein}
\setlength{\textheight}{8.8truein}
\setlength{\textwidth}{6.05truein}
\setlength{\hoffset}{-.7truein}


\newtheorem{theorem}{Theorem}
\newtheorem{proposition}[theorem]{Proposition}
\newtheorem{lemma}[theorem]{Lemma}
\newtheorem{corollary}[theorem]{Corollary}

\theoremstyle{definition}

\theoremstyle{remark}

\newtheorem{remark}[theorem]{Remark}




\newcommand{\dd}{\, \mathrm{d}}

\renewcommand{\epsilon}{\varepsilon}

\newcommand{\N}{\mathbb{N}}
\newcommand{\norm}[2][]{{\left\|#2\right\|_{#1}}} 
\newcommand{\abs}[2][]{{\left\vert#2\right\vert_{#1}}} 

\renewcommand{\phi}{\varphi}
\newcommand{\R}{\mathbb{R}}

\newcommand{\Sph}{\mathbb{S}}

\newcommand{\lapr}{-\frac{\mathrm{d}^2}{\mathrm{d}r^2}}


\usepackage{hyperref}
\begin{document}
	
	\title[Schr\"odinger operators with oscillating potentials]{Negative spectrum of Schr\"odinger Operators with Rapidly Oscillating Potentials}

    \author{Larry Read} 
	\address{Larry Read, Mathematisches Institut, Ludwig-Maximilans Universit\"at M\"unchen, Theresienstr. 39, 80333 München, Germany}
	\email{read@math.lmu.de}
 \subjclass[2010]{Primary: 35P20; Secondary: 81Q10}
\maketitle
\begin{abstract}
    For Schr\"odinger operators with potentials that are asymptotically homogeneous of degree $-2$, the size of the coupling determines whether it has finite or infinitely many negative eigenvalues. In the latter case the asymptotic accumulation of these eigenvalues at zero has been determined by Kirsch and Simon. 

    A similar regime occurs for potentials which are not asymptotically monotone, but oscillatory. In this case when the ratio between the amplitude and frequency of oscillation is asymptotically homogeneous of degree $-1$ the coupling determines the finiteness of the negative spectrum. We present a new proof of this fact by making use of a ground state representation. As a consequence of this approach we derive an asymptotic formula analogous to that of Kirsch and Simon.
\end{abstract}
\section{Introduction}
For the self-adjoint Schr\"odinger operator ${-\Delta-V}$ in $L^2(\R^d)$, the rate of decay of $V$ near infinity determines whether its negative spectrum is finite. It is known (see e.g. \cite{Frank2022SchrodingerInequalities}) that if there exists $R<\infty$ such that
\begin{align} \label{eqn:finitecond}
    V(x)\leq
        \frac{(d-2)^2}{4\abs{x}^2}+
        \frac{1}{4\abs{x}^2(\ln \abs{x})^2} \ \text{  for all }\abs{x}\geq R
\end{align}
then the number of its negative eigenvalues is finite. Conversely, if $V$ has slower decay where there are $\varepsilon>0$ and $R<\infty$ such that
\begin{align}\label{eqn:infinitecond}
     V(x)\geq
        \frac{(d-2)^2}{4\abs{x}^2}+
      \frac{(1+\varepsilon)}{4\abs{x}^2(\ln \abs{x})^2} \ \text{  for all }\abs{x}\geq R
\end{align}
then the operator has infinitely many negative eigenvalues, accumulating at zero. 

In both regimes, there has been much success in bounding the number or determining the exact asymptotic accumulation of these negative eigenvalues. A standard phase space heuristic suggests that the number of eigenvalues below $-E\leq 0$, which we denote by $N_E(V)$, should coincide with the volume of 
\begin{equation*} \label{eqn:phasespaceOMEGAE}
    \Omega_E(V)=\{(x,\xi)\in \R^d\times\R^d\colon |2\pi\xi|^2-V(x)<-E\}.
\end{equation*}
Many results are semiclassical, corroborating this prediction. Most notable is the Cwikel-Lieb-Rozenblum inequality \cite{CwikelsInequality,Lieb1976BoundsOperators,Rozenbljum1972DistributionOperators}, which states that for $d\geq 3$
\begin{equation}
    N_0(V)\lesssim_d \abs{\Omega_0(V)}.
\end{equation}
Furthermore, according to \cite[Theorem XIII.82]{Reed1978IV:Operators}, if $V(x)=\lambda\abs{x}^{-2+\varepsilon}(1+o(1))$ towards infinity, then
\begin{equation*}
    N_{E}(V)=\abs{\Omega_E(V)}(1+o(1)) \text{ as }E\downarrow 0.
\end{equation*}

In \cite{Kirsch1988CorrectionsOperators} Kirsch and Simon found analogous asymptotics in the borderline case, where the potential satisfies
\begin{equation*}
    V(x)=\lambda\abs{x}^{-2}(1+o((\ln \abs{x})^{-2})) \text{ as }\abs{x}\rightarrow \infty.
\end{equation*}
In this scenario, despite the finiteness of $N_0(V)$ for small couplings $\lambda$, the volume of $\Omega_E(V)$ diverges logarithmically. Consequently, adjustments to the conventional phase space volume are necessary. The main result in \cite{Kirsch1988CorrectionsOperators}, with its subsequent improvement by Hassell and Marshall in \cite{Hassell2008Eigenvalues-2}, states that
\begin{align}\label{eqn:KirschSimon}
N_E(V)=(2\pi)^{-1}\abs{\ln E}\sum_{k=1}\sqrt{\left(\lambda-\frac{(d-2)^2}{4}- \Lambda_k\right)_+}+O(1) \text{ as }E\downarrow 0,
\end{align}
where $\{\Lambda_k\}_{k=1}$ are the eigenvalues of the spherical Laplacian, $-\Delta_{\Sph^{d-1}}$ in $L^2(\Sph^{d-1})$. 

Thus far we've noted that the nature of the negative spectrum bifurcates according to whether the potential lies above or below a critical function with regular decay. However, Willett \cite{Willett1969OnEquations} and Wong \cite{Wong1969OscillationCoefficients} have demonstrated that in one dimension the potential \[V(x)=\frac{\lambda \sin x}{x}\] generates only finitely many negative eigenvalues for $\abs{\lambda}\leq 1/\sqrt{2}$ and infinitely many for $\abs{\lambda}>1/\sqrt{2}$. The significance of the coupling constant for this potential, and the fact its oscillatory nature supports much slower decay, isn't predicted by semiclassical heuristics.  

In this paper we are concerned with the effect of oscillatory behaviour on $N_E(V)$ as $E\downarrow 0$. To this end, our main result is an analogue of \eqref{eqn:KirschSimon} for a large class of potentials which exhibit this critical coupling behaviour. 
\begin{theorem}\label{thm:main}
    Let $V\in L^\infty_{\mathrm{loc}}(\R^d)$ satisfy 
    \begin{equation}\label{eqn:criticaloscpot}
        V(x)=\lambda|x|^{\alpha-2}\sin |x|^\alpha+o((|x|\ln\abs{x})^{-2}) \text{ as }|x|\rightarrow \infty
    \end{equation}
    for any $\alpha>0$ and $\lambda\in \R$. Then
    \begin{equation}\label{eqn:mainthmform}
        N_E(V)=(2\pi)^{-1}\abs{\ln E}\sum_{k=1}\sqrt{\left(\frac{\lambda ^2}{2 \alpha ^2}-\frac{(d-2)^2}{4}- \Lambda_k \right)_+}+O(1) \text{ as }E\downarrow 0,
    \end{equation}
    where $\{\Lambda_k\}_{k=1}$ denote the eigenvalues of $-\Delta_{\Sph^{d-1}}$ in $L^2(\Sph^{d-1})$. In particular if $|\lambda|\leq \alpha\abs{d-2}/\sqrt{2}$ then $N_0(V)$ is finite.
\end{theorem}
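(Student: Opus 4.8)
The plan is to separate variables, reducing the operator on $\R^d$ to a family of half-line problems, then to use a ground state representation that converts the oscillation into an effective inverse-square potential, after which the result follows from the Kirsch--Simon asymptotics. First I would decompose $L^2(\R^d)$ into spherical harmonics: writing $-\Delta-V$ in polar coordinates and conjugating by $r^{(d-1)/2}$, on the $k$-th harmonic subspace one obtains the half-line operator
\[
    h_k = \lapr + \frac{c_k}{r^2} - V(r), \qquad c_k = \frac{(d-2)^2}{4} - \frac14 + \Lambda_k,
\]
on $L^2(0,\infty)$, with $N_E(V)=\sum_k N_E(h_k)$ counted with multiplicity. Since $\Lambda_k\to\infty$, only finitely many $k$ can produce an unbounded count, so it suffices to analyse each $h_k$ and to show the remaining harmonics together contribute $O(1)$.

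The conceptual heart is the treatment of the oscillating term $W(r):=\lambda r^{\alpha-2}\sin r^\alpha$ through a ground state representation. Let $g(r)=\int_\infty^r W(s)\dd s$, so that $g'=W$ and $g(\infty)=0$; integrating by parts gives $g(r)=-\tfrac{\lambda}{\alpha r}\cos r^\alpha + O(r^{-2})$, in particular $g=O(r^{-1})$. Completing the square in the quadratic form on $(R,\infty)$ yields the exact identity
\[
    \int_R^\infty \abs{u'}^2\dd r - \int_R^\infty W\abs{u}^2\dd r = \int_R^\infty \abs{u' + gu}^2\dd r - \int_R^\infty g^2\abs{u}^2\dd r + g(R)\abs{u(R)}^2 .
\]
Crucially $g^2 = \tfrac{\lambda^2}{\alpha^2 r^2}\cos^2 r^\alpha + O(r^{-3}) = \tfrac{\lambda^2}{2\alpha^2 r^2} + \tfrac{\lambda^2}{2\alpha^2 r^2}\cos 2r^\alpha + O(r^{-3})$, so the averaging of $\cos^2$ produces the effective attractive inverse-square potential $-\tfrac{\lambda^2}{2\alpha^2 r^2}$, which is exactly the coupling in \eqref{eqn:mainthmform}.

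To eliminate the first-order term I would substitute $u=e^{-G}w$ with $G'=g$; since $g=O(r^{-1})$ oscillates, $G(r)=O(r^{-\alpha})$ is bounded and tends to $0$, whence $\int\abs{u'+gu}^2=\int e^{-2G}\abs{w'}^2$ with $e^{-2G}=1+o(1)$. The form of $h_k$ then becomes, up to the bounded weight $e^{-2G}$ and the boundary term,
\[
    \int_R^\infty e^{-2G}\abs{w'}^2\dd r + \int_R^\infty e^{-2G}\frac{c_k - \lambda^2/(2\alpha^2)}{r^2}\abs{w}^2\dd r - \int_R^\infty e^{-2G}\left(\frac{\lambda^2}{2\alpha^2 r^2}\cos 2r^\alpha + o\!\big((r\ln r)^{-2}\big)\right)\abs{w}^2\dd r .
\]
Comparing with the model operator $\lapr + (c_k - \lambda^2/(2\alpha^2))r^{-2}$, whose count below $-E$ equals $\tfrac{1}{2\pi}\sqrt{(\lambda^2/(2\alpha^2) - c_k - 1/4)_+}\,\abs{\ln E} + O(1)$, and recalling $c_k + 1/4 = (d-2)^2/4 + \Lambda_k$, summation over $k$ produces exactly \eqref{eqn:mainthmform}; the finiteness statement is the case where every effective coupling is subcritical, i.e. $\lambda^2/(2\alpha^2)-(d-2)^2/4-\Lambda_1\le 0$ with $\Lambda_1=0$.

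The main obstacle is to show that all discarded terms alter $N_E$ by only $O(1)$, uniformly as $E\downarrow0$. This is delicate precisely because the leading potential is the borderline inverse square, on which even $o(r^{-2})$ perturbations can a priori affect the logarithmic count at the relevant scale $\abs{\ln E}$. I would control the oscillating remainder $\tfrac{\lambda^2}{2\alpha^2 r^2}\cos 2r^\alpha$, whose antiderivative is $O(r^{-1-\alpha})$ and hence lies far below threshold, and the hypothesis error $o((r\ln r)^{-2})$, by the same stability of the $\abs{\ln E}$-asymptotics under $o((r\ln r)^{-2})$ perturbations that underlies \eqref{eqn:finitecond}--\eqref{eqn:infinitecond}. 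A Dirichlet--Neumann bracketing on a geometric sequence of intervals then separates off the compact region $(0,R)$ and the subcritical high harmonics, which a Bargmann-type bound shows together contribute only $O(1)$.
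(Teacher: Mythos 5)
Your overall strategy is sound and runs parallel to the paper's: separation into spherical harmonics with the conjugation by $r^{(d-1)/2}$ (your $c_k$ matches the paper's $\tfrac{4\Lambda_k+(d-1)(d-3)}{4}$), a ground state representation that converts the oscillation into an effective attractive inverse-square potential with coupling $\lambda^2/(2\alpha^2)$, and a reduction to the known logarithmic eigenvalue asymptotics at the critical scale. The organizational differences are worth noting. After the first conjugation you split $g^2=\tfrac{\lambda^2}{2\alpha^2 r^2}+\tfrac{\lambda^2}{2\alpha^2 r^2}\cos 2r^\alpha+O(r^{-2-\alpha})$ and keep the mean as an explicit potential; the paper instead applies the ground state representation a \emph{second} time to the full $W^2=\lambda^2\cos^2(r^\alpha)/(\alpha^2r^2)$, so that the mean value $1/2$ enters through $\widetilde U\sim\tfrac{\lambda^2}{2\alpha^2}\ln r$ and reappears as the power weight $r^{\lambda^2/\alpha^2}$ in the measure, with effective potential $-\lambda^4/(4\alpha^4 r^2)$; the two bookkeepings give the same $\eta_k$. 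You then quote the Kirsch--Simon count for the model operator as a black box, whereas the paper rederives it via the Sturm oscillation theorem and Pr\"ufer variables (following Hassell--Marshall), which is what actually delivers the uniform $O(1)$ remainder, including the effect of the $\varepsilon(r\ln r)^{-2}$ error and the Robin-to-Dirichlet rank-one swaps.

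The one step you assert but do not carry out is the removal of the residual term $\tfrac{\lambda^2}{2\alpha^2 r^2}\cos 2r^\alpha$. This term is $O(r^{-2})$ pointwise, sitting exactly at the critical scale, so it is \emph{not} covered by "stability of the asymptotics under $o((r\ln r)^{-2})$ perturbations" as you suggest; a non-oscillating perturbation of that size would shift the coefficient of $\abs{\ln E}$. You correctly identify the saving feature (its antiderivative is $O(r^{-1-\alpha})$), but turning that into a proof requires one more completion of the square: with $\tilde g(r)=\int_r^\infty \tfrac{\lambda^2\cos 2s^\alpha}{2\alpha^2 s^2}\dd s=O(r^{-1-\alpha})$ the term is traded for $-\tilde g^2=O(r^{-2-2\alpha})=o((r\ln r)^{-2})$ plus a conjugation weight $e^{-2\tilde G}$ with $\tilde G$ convergent, after which your stability argument does apply. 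This is exactly your own technique applied once more, so the gap is fixable, but as written the proof is incomplete at the point you yourself call the main obstacle. Two smaller points: the high harmonics with $\eta_k\le 0$ contribute \emph{zero} negative eigenvalues for large $R$ by the logarithmic Robin--Hardy inequality, not merely $O(1)$ via a Bargmann bound (Bargmann's hypothesis $\int r\abs{V}\dd r<\infty$ fails at the inverse-square scale); and the boundary terms $g(R)\abs{u(R)}^2$ produced by your integrations by parts need to be absorbed into the Robin data when you bracket at $r=R$.
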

In this result, both potentials with slow decay and rapid growth are permissible, subject to the rate of oscillation at infinity. Comparable to the regular conditions \eqref{eqn:finitecond} and \eqref{eqn:infinitecond}, we will show that if a potential oscillates or decays faster, or grows slower, than the critical \eqref{eqn:criticaloscpot} then $N_0(V)$ will be finite (or infinite conversely). 

The difficulty in studying such operators comes from the absence of simple variational methods. Standard upper or lower bounds on the potential don't capture the intricate interactions between its attractive and repulsive parts. To overcome this, we use an idea of Combescure and Ginibre \cite{Combescure1976SpectralPotentials,Combescure1980SpectralPotentials}. Specifically, we leverage a ground state representation to transform our operator to one with a purely attractive potential that subsumes the original repulsive components. It is a result of this transformation that the leading term in \eqref{eqn:mainthmform} is $O(\lambda^d)$ whilst all the aforementioned results scale with the semiclassical $O(\lambda^{d/2})$.

We note that in \cite{Raikov2016DiscretePotentials} Raikov has determined the result above for $\alpha=1$. They consider a larger class of potentials which consist of the product of an almost periodic function and a function which decays asymptotically with degree $-1$. Similarly the author reduces the operator to one with an effective attractive potential. However, the case of rapid (or slow) rates of oscillation are illusive to the approach in \cite{Raikov2016DiscretePotentials}. Our results refine this, in the radial regime, from a somewhat different perspective. 

A direct consequence of the formula in Theorem \ref{thm:main}, is that the negative eigenvalues which accumulate to zero can be characterised, up to a constant factor. 
\begin{corollary}\label{cor:main}
    Let $V$ be as in Theorem \ref{thm:main} and denote by $\{\lambda_k(V)\}_{k=1}$ the negative $-\Delta-V$ in $L^2(\R^d)$, in ascending order. Then there exists $C,c>0$ and $K\in \N$ such that
    \begin{equation*}
       c\exp\left(-\frac{k}{M}\right) \leq \abs{\lambda_k(V)}\leq C\exp\left(-\frac{k}{M}\right) \text{ for all } k\geq K,
    \end{equation*}
    where $M=M(\lambda,\alpha,d)$ is the coefficient of $\abs{\ln E}$ in \eqref{eqn:mainthmform}.
\end{corollary}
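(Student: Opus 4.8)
The plan is to derive Corollary \ref{cor:main} directly from the counting asymptotics in Theorem \ref{thm:main} by a standard inversion argument that converts an asymptotic count of eigenvalues below a threshold into an asymptotic spacing of the eigenvalues themselves. First I would set $M=M(\lambda,\alpha,d)=(2\pi)^{-1}\sum_{k}\sqrt{(\lambda^2/(2\alpha^2)-(d-2)^2/4-\Lambda_k)_+}$, so that Theorem \ref{thm:main} reads $N_E(V)=M\abs{\ln E}+O(1)$ as $E\downarrow 0$. The eigenvalues $\{\lambda_k(V)\}$ are listed in ascending order and accumulate at zero (when $M>0$), so $\abs{\lambda_k(V)}$ decreases to $0$ as $k\to\infty$; by definition of the counting function, $N_E(V)$ and $k$ are essentially inverse to one another, with $N_E(V)=\#\{j\colon \abs{\lambda_j(V)}>E\}$.

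The key step is to make this inversion precise. I would fix $k$ large and set $E=\abs{\lambda_k(V)}$; then on one hand $N_{E}(V)$ counts the number of eigenvalues strictly below $-E$, which is at most $k-1$, while taking $E$ slightly smaller (any $E'<\abs{\lambda_k(V)}$ with no eigenvalue in $[-\abs{\lambda_k(V)},-E')$) gives a count of at least $k$. Combining these with the asymptotic $N_{E}(V)=M\abs{\ln E}+O(1)$ yields $M\abs{\ln \abs{\lambda_k(V)}}=k+O(1)$, i.e.\ there is a constant $C_0$ with $\bigl\lvert M\abs{\ln\abs{\lambda_k(V)}}-k\bigr\rvert\leq C_0$ for all $k\geq K$. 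Rearranging gives $-k/M - C_0/M\leq \ln\abs{\lambda_k(V)}\leq -k/M + C_0/M$, and exponentiating produces
\begin{equation*}
    e^{-C_0/M}\,\exp\!\left(-\frac{k}{M}\right)\leq \abs{\lambda_k(V)}\leq e^{C_0/M}\,\exp\!\left(-\frac{k}{M}\right),
\end{equation*}
which is exactly the claimed two-sided bound with $c=e^{-C_0/M}$ and $C=e^{C_0/M}$.

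The main obstacle is the handling of possible multiplicities and clustering of eigenvalues, which is what prevents the inversion from being completely trivial: the relation between $k$ and $N_E(V)$ is only an inverse up to the ambiguity of where eigenvalues coincide or pile up within an $O(1)$ window. This is precisely why the corollary gives two-sided bounds with unspecified constants $c,C$ rather than a sharp asymptotic for $\abs{\lambda_k(V)}$ itself. To make the argument airtight I would phrase it purely through the monotone inverse: since $E\mapsto N_E(V)$ is nonincreasing and right-continuous, the value $k\mapsto \abs{\lambda_k(V)}$ is its generalized inverse, and the $O(1)$ error in Theorem \ref{thm:main} transfers directly to an $O(1)$ error in $k=M\abs{\ln\abs{\lambda_k(V)}}+O(1)$ uniformly in $k$, without needing to track individual multiplicities. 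The choice of $K$ simply absorbs the range where the asymptotic error term is not yet controlled.
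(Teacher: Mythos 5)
Your proposal is correct and follows essentially the same route as the paper: both invert the counting asymptotic $N_E(V)=M\abs{\ln E}+O(1)$ at $E=\abs{\lambda_k(V)}$ to get $M\abs{\ln\abs{\lambda_k(V)}}=k+O(1)$ and then exponentiate. Your treatment is in fact slightly more careful than the paper's (which simply writes $N_{\abs{\lambda_k}}(V)=k-1$), since you sandwich the count between $k-1$ and $k$ to account for possible multiplicities, but the argument is the same.
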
 
The plan for the paper is as follows. In Section \ref{sec:hardy} we recall how Hardy's inequality can be applied to obtain the conditions \eqref{eqn:finitecond} and \eqref{eqn:infinitecond}. Then in Section \ref{sec:groundstate} we introduce the ground state representation, and apply it to determine the critical nature of the oscillating potentials \eqref{eqn:criticaloscpot}. Finally, in Section \ref{sec:mainproof} we use this representation with the method of Hassell and Marshall from \cite{Hassell2008Eigenvalues-2} to prove Theorem \ref{thm:main}.

\section{Hardy's inequality and finiteness of the negative spectrum} \label{sec:hardy}

Hardy's inequality serves as an immediate precursor to condition \eqref{eqn:finitecond}, establishing that potentials bounded everywhere by a reduced form of \eqref{eqn:finitecond} yield no negative eigenvalues. It states that 
\begin{equation}\label{eqn:hardyweight}
    \int_{\R^d} \frac{(d-2)^2\abs{u(x)}^2}{4\abs{x}^2}\dd x\leq \int_0^\infty \abs{\nabla u (x)}^2\dd x \text{ for all }u\in C_0^\infty(\R^d\backslash\{ 0\}).
\end{equation}
To extend this to condition \eqref{eqn:finitecond}, which addresses the asymptotic behavior of potentials, one may apply specialized variants of the inequality and employ bracketing techniques. We recall this argument, which can be found in \cite{Frank2022SchrodingerInequalities,Frank2022AnInequality}.

Suppose that the operator $-\Delta-V$ in $L^2(\R^d)$ is well-defined in the form sense with form domain $H^1(\R^d)$. Then through bracketing, we can reduce it by imposing Robin boundary conditions along the boundary of the ball $B_R=\{x\colon \abs{x}<R\}$, for any $R>0$. Namely, for any $\sigma\in \R$, take $H^{-}_{\sigma,R}$ and $H^{+}_{\sigma,R}$ to be the unique operators corresponding to the quadratic forms 
\begin{equation} \label{eqn:robinops}
    \begin{split}
    &\int_{B_R}\abs{\nabla u}^2\dd x-\sigma \int_{\partial B_R}\abs{u(y)}^2\dd \nu(y), \text{ and }\\
    &\int_{\overline{B_R}^c}\abs{\nabla u}^2\dd x+\sigma \int_{\partial B_R}\abs{u(y)}^2\dd \nu(y)
    \end{split}
\end{equation}
with form domains $H^1(B_R)$ and $H^1(\overline{B_R}^c)$, respectively, where $d\nu$ denotes the surface measure on $\partial B_R$. Then the operator $-\Delta-V$ is bounded from below, in the form sense, by the direct sum $H_{\sigma,R}^{-}\oplus H_{\sigma,R}^{+}$. In particular
\begin{equation}\label{eqn:bracketingRobin}
    N_E(V)\leq N_E(H_{\sigma,R}^{-})+N_E(H_{\sigma,R}^{+}) \ \text{ for any }-E\leq 0,
\end{equation}
where we take $N_E(\ \cdot\ )$ to count the eigenvalues of the enclosed operator below $-E$. Under fairly general assumptions on $V$, the spectrum of $H_{\sigma,R}^{-}$ is purely discrete and thus $N_0(H_{\sigma,R}^{-})<\infty$. As a result, the finiteness of $N_0(V)$ follows from that of $N_0(H^{+}_{\sigma,R})$. 

If we can choose $R$ such that \[V(x)\leq \frac{(d-2)^2}{4\abs{x}^2}\text{ for all }\abs{x}\geq R,\] then we can invoke a Robin variant of Hardy's inequality due to Kova\v{r}\'{i}k and Laptev \cite{Kovarik2012HardyLaplacians}. It states that if $\sigma\geq 1/2R$, then for any $u\in H^1(\overline{B_R}^c)$
\begin{equation}\label{eqn:RobinHardyKovarik}
    \int_{\overline{B_R}^c}\frac{(d-2)^2\abs{u}^2}{4\abs{x}^2}\dd x\leq \int_{\overline{B_R}^c}\abs{\nabla u(x)}^2\dd x+\sigma\int_{\partial B_R}\abs{u(y)}^2\dd \nu(y).
\end{equation}
Thus, after selecting an appropriately large $\sigma$ it transpires that $N_0(H_{\sigma,R}^{+})=0$ and we deduce from \eqref{eqn:bracketingRobin} that $N_0(V)$ is finite. 

To include the logarithmic term in \eqref{eqn:finitecond}, a coordinate transformation can be applied to \eqref{eqn:RobinHardyKovarik}. This inequality is established below, in a one-dimensional setting, featuring general weights that play a role in Section \ref{sec:groundstate}.
\begin{lemma}\label{lem:robinhardylog}
    Let $\rho\in \R$ and $R>1$. Then for any $\sigma\geq R^{\rho-1}((\ln R)^{-1}+(1-\rho))/2$,
    \begin{equation*}
         \int_R^\infty \left(\frac{(\rho-1)^2}{4r^2}+\frac{1}{4r^2(\ln r)^2}\right)\abs{u(r)}^2 r^{\rho}\dd r\leq \int_R^\infty \abs{u^\prime(r)}^2 r^\rho \dd r+\sigma \abs{u(R)}^2
    \end{equation*}
    for all $u\in H^1((R,\infty),r^{\rho}\dd r)$. 
\end{lemma}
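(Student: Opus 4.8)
The plan is to prove the inequality by a one-dimensional ground state substitution, the exact analogue of the argument that produces \eqref{eqn:RobinHardyKovarik}. The weight $\frac{(\rho-1)^2}{4r^2}+\frac{1}{4r^2(\ln r)^2}$ is the critical potential for which the operator $-\frac{1}{r^\rho}\frac{\mathrm{d}}{\mathrm{d}r}\big(r^\rho\frac{\mathrm{d}}{\mathrm{d}r}\big)$ has a positive zero-energy solution, so the first step is to exhibit it. I would take
\[
\phi(r)=r^{(1-\rho)/2}(\ln r)^{1/2}, \qquad r>1,
\]
which is positive and smooth on $(R,\infty)$ because $R>1$, and verify by direct differentiation that it solves the Euler--Lagrange equation $-(r^\rho \phi')'=\big(\frac{(\rho-1)^2}{4r^2}+\frac{1}{4r^2(\ln r)^2}\big)r^\rho\phi$. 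The logarithmic factor is precisely what generates the $\frac{1}{4r^2(\ln r)^2}$ correction: in computing $(r^\rho\phi')'$ the intermediate terms carrying $(\ln r)^{-1/2}$ cancel, leaving exactly the $(\ln r)^{1/2}$ and $(\ln r)^{-3/2}$ contributions that reconstitute the claimed potential.

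Next I would carry out the substitution $u=\phi v$. Writing $u'=\phi' v+\phi v'$, expanding $\abs{u'}^2$, and integrating the cross term $\phi'\phi(\abs{v}^2)'$ by parts on a finite interval $[R,r_n]$ yields the ground state identity
\[
\int_R^{r_n}\abs{u'}^2 r^\rho \dd r=\int_R^{r_n}\Big(\frac{(\rho-1)^2}{4r^2}+\frac{1}{4r^2(\ln r)^2}\Big)\abs{u}^2 r^\rho\dd r+\int_R^{r_n}\abs{v'}^2\phi^2 r^\rho\dd r+\Big[\frac{\phi'}{\phi}r^\rho\abs{u}^2\Big]_R^{r_n},
\]
where the interior potential term arises by feeding in the Euler--Lagrange equation. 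Evaluating the boundary contribution at $r=R$ gives $\frac{\phi'(R)}{\phi(R)}R^\rho=\frac{R^{\rho-1}}{2}\big((\ln R)^{-1}+(1-\rho)\big)$, which is exactly the threshold $\sigma$ of the statement. Rearranging to isolate $\int_R^{r_n}W\abs{u}^2 r^\rho\dd r$ and dropping the nonnegative term $\int_R^{r_n}\abs{v'}^2\phi^2 r^\rho\dd r$ then gives the desired inequality at the threshold value; since $\abs{u(R)}^2\geq 0$, any larger $\sigma$ only enlarges the right-hand side.

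The main technical point, and where I expect the real work to lie, is passing from $[R,r_n]$ to $(R,\infty)$, i.e.\ controlling the boundary term at $r_n$. Since $\frac{\phi'}{\phi}\sim-\frac{\rho-1}{2r}$ at infinity, this term behaves like a multiple of $r_n^{\rho-1}\abs{u(r_n)}^2$, and membership in $H^1((R,\infty),r^\rho\dd r)$ supplies no pointwise decay of $u$. I would get around this by observing that $\int_R^\infty\abs{u}^2 r^\rho\dd r<\infty$ forces $\liminf_{r\to\infty}\abs{u(r)}^2 r^{\rho-1}=0$, for otherwise the integrand would dominate a nonintegrable multiple of $r$; one may therefore choose $r_n\to\infty$ along which the $r_n$-boundary term vanishes. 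Letting $n\to\infty$, the two nondecreasing integrals on the right converge by monotone convergence while the boundary term at $r_n$ drops out, producing the limiting identity on $(R,\infty)$ and hence the inequality. A small amount of care is also needed to justify the integration by parts for general $u\in H^1$ rather than smooth $u$: because $\phi$ is smooth and bounded away from zero on each compact subinterval, $v=u/\phi$ lies in $H^1_{\mathrm{loc}}$ and the standard Sobolev integration-by-parts formula applies on every $[R,r_n]$.
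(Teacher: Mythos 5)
Your proof is correct, but it takes a genuinely different route from the paper. The paper proves the lemma by reduction: the substitution $v(r)=r^{(\rho-1)/2}u(r)$ removes the weight $r^\rho$ and the term $(\rho-1)^2/4r^2$, and the further change of variables $\widetilde v(t)=v(e^t)$ turns the claim into the Robin--Hardy inequality of Kova\v{r}\'{\i}k and Laptev, \eqref{eqn:RobinHardyKovarik}, which is then invoked as a black box; the threshold on $\sigma$ is read off from how the boundary coefficient transforms. You instead give a direct, self-contained ground-state argument: you exhibit the explicit zero-energy solution $\phi(r)=r^{(1-\rho)/2}(\ln r)^{1/2}$ of $-(r^\rho\phi')'=W r^\rho\phi$ (your cancellation of the $(\ln r)^{-1/2}$ terms checks out), derive the factorization identity for $u=\phi v$ on finite intervals, and correctly identify $\tfrac{\phi'(R)}{\phi(R)}R^\rho=\tfrac{R^{\rho-1}}{2}\bigl((\ln R)^{-1}+(1-\rho)\bigr)$ with the stated threshold. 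What your approach buys is that it makes explicit the one genuinely delicate point, the boundary term at infinity (relevant when $\rho>1$, where $\phi'/\phi<0$), which you dispatch correctly via the $\liminf_{r\to\infty}r^{\rho-1}|u(r)|^2=0$ argument; in the paper this technical work is hidden inside the cited inequality. What the paper's route buys is brevity and the conceptual point that the general-$\rho$ statement is literally the $\rho=1$ logarithmic Hardy inequality in disguise. Both proofs are sound.
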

\begin{proof}
    Let $v(r)\coloneqq r^{(\rho-1)/2}u(r)$, then proving the stated inequality is equivalent to showing that
    \begin{align*}
        \int_R^\infty \frac{\abs{v(r)}^2}{4r^2(\ln r)^2} r\dd r\leq \int_R^\infty \abs{ v^\prime(r)}^2 r\dd r+(\sigma R^{1-\rho}+(1-\rho)/2)\abs{v(R)}^2.
    \end{align*}
    \newpage
    Now making the substitution $\widetilde{v}(t)\coloneqq v(e^t)$ we see that this changes to 
    \begin{align*}
        \int_{\ln R}^\infty \frac{ \abs{\widetilde v(t)}^2}{4t^2} \dd t\leq \int_{\ln R}^\infty \abs{\widetilde v^\prime}^2\dd t+(\sigma R^{1-\rho}+(1-\rho)/2)\abs{\widetilde v(\ln R)}^2
    \end{align*}
    but this inequality, under the conditions imposed on $\sigma$, is exactly that of Kova\v{r}\'{i}k and Laptev \eqref{eqn:RobinHardyKovarik}.
\end{proof}
Now we address the converse claim, \eqref{eqn:infinitecond}, asserting that slow asymptotic decay leads to infinitely many negative eigenvalues. 

The operator $-\Delta-V$ can be bounded from above by imposing Dirichlet boundary conditions on $\partial B_R$. That is, if we let $H^{-}_{\infty,R}$ and $H^{+}_{\infty,R}$ correspond to the forms \eqref{eqn:robinops} with respective domains $H^1_0(B_R)$ and $H^1_0(\overline{B_R}^c)$, then 
\begin{equation*}
    N_E(V)\geq N_E(H^{-}_{\infty,R})+N_E(H^{+}_{\infty,R}) \ \text{ for any }-E\leq 0.
\end{equation*}
Under the condition that $R<\infty$ and $\varepsilon>0$ can be selected so that 
\begin{equation*}
    V(x)\geq\frac{(d-2)^2}{4\abs{x}^2}+\frac{(1+\varepsilon)}{4\abs{x}^2\left(\ln\abs{x}\right)^2} \text{ for all } \abs{x}\geq R,
\end{equation*}
the subsequent lemma establishes an infinite dimensional subspace of $L^2(\overline{B_R}^c)$ corresponding to the negative spectrum of $H^{+}_{\infty,R}$, and thus $N_0(V)=\infty$. 
\begin{lemma}\label{lem:reversehardy}
    Let $\rho\in \R$ and $R\geq 0$. Then for any $\varepsilon>0$ there exists an infinite sequence of $\{u_k\}_{k=1}\subset H_0^1((R,\infty), r^\rho\dd r)$ which are orthonormal in $L^2((R,\infty),r^\rho\dd r)$ and satisfy
    \begin{equation*}
         \int_R^\infty \left(\frac{(\rho-1)^2}{4r^2}+\frac{(1+\varepsilon)}{4r^2(\ln r)^2}\right)\abs{u_k(r)}^2 r^{\rho}\dd r> \int_R^\infty \abs{u_k^\prime(r)}^2 r^\rho \dd r.
    \end{equation*}
\end{lemma}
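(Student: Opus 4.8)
The plan is to mirror the two substitutions used in the proof of Lemma~\ref{lem:robinhardylog}. Setting $v(r)=r^{(\rho-1)/2}u(r)$ and then $\widetilde v(t)=v(\e^t)$, the weighted terms $\tfrac{(\rho-1)^2}{4r^2}$ appearing on the two sides cancel exactly: the cross term produced by integration by parts in the first substitution is a total derivative $\tfrac{1-\rho}{2}\big[|v|^2\big]_R^\infty$, which vanishes for compactly supported functions (equivalently, for $u\in H^1_0$). After the logarithmic change of variable the claimed inequality is seen to be equivalent to the reverse of the critical one-dimensional Hardy inequality
\begin{equation*}
\int_{\ln R}^\infty |\widetilde v'(t)|^2\dd t < \int_{\ln R}^\infty \frac{1+\varepsilon}{4t^2}\,|\widetilde v(t)|^2\dd t
\end{equation*}
on the half-line $(\ln R,\infty)$. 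Since the sharp constant in Hardy's inequality is $1/4$, a coupling $(1+\varepsilon)/4>1/4$ should admit an infinite-dimensional violating subspace, and it remains to construct one explicitly and transport it back through the substitutions.

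To produce the violating functions I would exploit the oscillation that appears once the coupling exceeds $1/4$. Fix $\delta\in(0,\varepsilon)$ and consider the Euler-type equation $g''+\tfrac{1+\delta}{4t^2}g=0$, whose indicial roots $\tfrac12\pm\tfrac{\ii\sqrt\delta}{2}$ are complex; the real solution $g(t)=t^{1/2}\sin\!\big(\tfrac{\sqrt\delta}{2}\ln t\big)$ therefore has infinitely many zeros $t_1<t_2<\cdots\to\infty$. On each interval between consecutive zeros I would set $w_k:=g\,\mathbf{1}_{[t_k,t_{k+1}]}$, which vanishes at both endpoints and hence lies in $H^1_0$. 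Integrating by parts and using the equation gives
\begin{equation*}
\int_{t_k}^{t_{k+1}}|w_k'|^2\dd t=\int_{t_k}^{t_{k+1}}\frac{1+\delta}{4t^2}\,|w_k|^2\dd t=\frac{1+\delta}{1+\varepsilon}\int_{t_k}^{t_{k+1}}\frac{1+\varepsilon}{4t^2}\,|w_k|^2\dd t,
\end{equation*}
and since $\tfrac{1+\delta}{1+\varepsilon}<1$ this is precisely the desired strict reverse inequality for each $w_k$.

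The functions $w_k$ have pairwise disjoint supports, so they are automatically orthogonal in any weighted $L^2$ space; discarding the finitely many indices with $t_k\le\max(\ln R,1)$ leaves infinitely many supported in the region where both the substitution and the weight $1/(\ln r)^2$ are harmless. Undoing the substitutions, $u_k(r):=r^{(1-\rho)/2}w_k(\ln r)$, and normalising, yields the required orthonormal sequence in $H^1_0((R,\infty),r^\rho\dd r)$. I expect the one genuine subtlety to be the strictness of the inequality: working with an exact solution of the critical equation at coupling exactly $(1+\varepsilon)/4$ would produce only equality, and it is the choice of the strictly smaller coupling $(1+\delta)/4$ that converts this into the strict inequality demanded by the statement. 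The remaining points—vanishing of the boundary terms, membership in $H^1_0$, and avoiding the singularity of $1/(\ln r)^2$ at $r=1$—are handled, respectively, by the Dirichlet condition, the compact support, and pushing the supports out towards infinity.
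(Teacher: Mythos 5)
Your argument is correct, and the reduction via $v(r)=r^{(\rho-1)/2}u(r)$ and $\widetilde v(t)=v(\e^t)$ to the half-line inequality with weight $(1+\varepsilon)/(4t^2)$ is exactly the one the paper performs. Where you genuinely diverge is in how the infinite family of violating functions is produced. The paper takes a single explicit test function, $v_L(t)=\sqrt{t}\,(1-|\log t|/\log L)_+$, computes both sides by hand to get negativity of the form for $L$ large, and then uses the scaling covariance $h[\mathcal{U}_\kappa v]=\kappa^{-2}h[v]$ of the critical form to push the support out past $\ln R$ and iterate, obtaining disjointly supported copies. You instead take the nodal domains of the oscillatory solution $g(t)=t^{1/2}\sin\bigl(\tfrac{\sqrt\delta}{2}\ln t\bigr)$ of the subcritical Euler equation $g''+\tfrac{1+\delta}{4t^2}g=0$ with $0<\delta<\varepsilon$; integration by parts on each nodal interval gives equality at coupling $1+\delta$ and hence strict inequality at coupling $1+\varepsilon$, and the infinitely many zeros hand you the disjointly supported sequence for free, with no separate scaling step. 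Your route is the classical oscillation-theoretic proof that the Hardy constant $1/4$ is sharp, and it meshes nicely with the Sturm--Pr\"ufer analysis the paper uses later in Section~\ref{sec:mainproof}; the paper's route is more elementary in that it requires no ODE input, only a direct two-line computation. Your identification of the one subtlety --- that working at the exact coupling $(1+\varepsilon)/4$ would yield only equality, so one must drop to $(1+\delta)/4$ --- is exactly right, and the remaining points (vanishing boundary terms, $H^1_0$ membership, discarding the finitely many nodal intervals not contained in $(\max(\ln R,1),\infty)$) are all handled correctly.
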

\begin{proof}
    Note that we can carry out the same change of coordinates used in Lemma \ref{lem:robinhardylog}. Then it is sufficient to show that there is an infinite sequence of bounded and compactly supported functions $v_k$, with disjoint supports, such that 
    \begin{align}\label{eqn:lemproof}
        \int_{\ln R}^\infty \frac{(1+\varepsilon)\abs{ v_k(t)}^2}{4t^2}\dd t> \int_{\ln R}^\infty \abs{  v_k^\prime(t)}^2 \dd t.
    \end{align}
    Moreover, it is only necessary to determine this with $R=1$ for one function supported in $(0,\infty)$. To see this, note that if $\widetilde v$ is such that $h[\widetilde v]<0$, where
    \begin{equation*}
        h[v]\coloneqq \int_{0}^\infty \abs{ v^\prime(t)}^2- \frac{(1+\varepsilon)\abs{v(t)}^2}{4t^2}\dd t,
    \end{equation*}
    then for any $\kappa>0$, under the unitary operator $\mathcal{U}_\kappa v(t)\coloneqq \kappa^{-1/2} v(\kappa^{-1} t)$,
    \begin{equation*}
        h[\mathcal{U}_\kappa \widetilde v]=\kappa^{-2}h[\widetilde v]<0.
    \end{equation*}
    Thus we can choose $\kappa$ large enough so that the support of $\mathcal{U}_\kappa \widetilde{v}$ lies in $(\ln R,\infty)$ and satisfies \eqref{eqn:lemproof}. Iterating this scaling argument, we can construct the desired sequence of disjointly supported functions (see e.g. \cite[Theorem XIII.6]{Reed1978IV:Operators}). 

    Now consider the function $v_L(t)=\sqrt{t}\left(1-\frac{\abs{\log t}}{\log L}\right)_+$ with $L> 1$, which is supported in $[1/L,L]$. Then one can calculate that 
    \begin{align*}
        \int_{0}^\infty \frac{(1+\varepsilon) \abs{v_L(t)}^2}{4t^2} \dd t=\frac{(1+\varepsilon)}{6}\ln L,
    \end{align*}
    whereas 
    \begin{align*}
        \int_0^\infty \abs{v_L^\prime(t)}^2 \dd t=\frac{1}{6}\ln L+\frac{2}{\ln L}.
    \end{align*}
    Thus fixing $L$ to be sufficiently large and taking $\widetilde{v}(t)=v_L(t)$ concludes the result. 
\end{proof}
Lemma \ref{lem:robinhardylog} and \ref{lem:reversehardy} will be enough to determine when our oscillatory potentials generate finitely or infinitely many negative eigenvalues. However, we note that there are general forms of Hardy's inequality which apply directly to such potentials. The following is generally attributed to Kats and Krein \cite{Kats1958CriteriaString}.
\begin{lemma}\label{thm:hille}
    Let $V\in L^1_{\mathrm{loc}}(\R_+)$, then for any $u\in C_0^\infty(\R_+)$ 
    \begin{equation*}
        \int_0^\infty V(r)\abs{u(r)}^2\dd r\leq 4\left(\sup_{t>0} t^{-1}\abs{\int_t^\infty V(s)\dd s}\right)\int_0^\infty \abs{u^\prime(r)}^2\dd r.
    \end{equation*}
\end{lemma}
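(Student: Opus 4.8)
The plan is to integrate by parts against the tail of the potential and then invoke the classical one-dimensional Hardy inequality. Writing $K$ for the supremum on the right-hand side (which may be assumed finite, as otherwise there is nothing to prove), the hypothesis furnishes the pointwise decay bound
\[
    \left|W(r)\right|\le \frac{K}{r},\qquad W(r):=\int_r^\infty V(s)\dd s,
\]
for every $r>0$; in particular each tail integral converges and $W$ is locally absolutely continuous with $W'(r)=-V(r)$ for almost every $r$.

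First I would fix $u\in C_0^\infty(\R_+)$ and replace $V$ by $-W'$. Integration by parts gives
\[
    \int_0^\infty V(r)\abs{u(r)}^2\dd r
    =-\int_0^\infty W'(r)\abs{u(r)}^2\dd r
    =2\int_0^\infty W(r)\,\re\!\left(\overline{u(r)}\,u'(r)\right)\dd r,
\]
the boundary terms $\bigl[W\abs{u}^2\bigr]_0^\infty$ vanishing because $u$ is compactly supported inside $(0,\infty)$ and hence identically zero near both endpoints.

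Next I would estimate, using $\abs{W(r)}\le K/r$ followed by the Cauchy--Schwarz inequality,
\[
    \left|\int_0^\infty V\abs{u}^2\dd r\right|
    \le 2K\int_0^\infty \frac{\abs{u(r)}}{r}\,\abs{u'(r)}\dd r
    \le 2K\left(\int_0^\infty \frac{\abs{u(r)}^2}{r^2}\dd r\right)^{1/2}\left(\int_0^\infty \abs{u'(r)}^2\dd r\right)^{1/2}.
\]
The reason for integrating against the tail $W$, rather than some other antiderivative of $-V$, is precisely that the decay $\abs{W}\le K/r$ combines with the Cauchy--Schwarz splitting to reproduce the Hardy weight $r^{-2}$. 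It then suffices to apply the sharp one-dimensional Hardy inequality $\int_0^\infty r^{-2}\abs{u}^2\dd r\le 4\int_0^\infty\abs{u'}^2\dd r$ (the $d=1$ case of \eqref{eqn:hardyweight}, valid here since functions in $C_0^\infty(\R_+)$ vanish at the origin), which bounds the first factor by $2(\int_0^\infty\abs{u'}^2\dd r)^{1/2}$ and yields the constant $4K$.

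The step I expect to require the most care is the justification of the integration by parts when $V$ is only locally integrable and the tail $\int_r^\infty V$ converges merely conditionally, as happens for the motivating potential $\sin(s)/s$. Finiteness of $K$ already guarantees convergence of $W(r)$ together with the bound $\abs{W(r)}\le K/r$, and on the compact support of $u$ the function $W$ is absolutely continuous with $W'=-V$, so the Leibniz rule applies; the signed, oscillatory nature of $V$ then causes no difficulty, since absolute values are taken only after the cancellation has been captured by passing to $W$.
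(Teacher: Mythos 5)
Your proposal is correct and follows essentially the same route as the paper: pass to the tail $W(r)=\int_r^\infty V$, integrate by parts, apply Cauchy--Schwarz together with the pointwise bound $\abs{W(r)}\le K/r$, and finish with the one-dimensional Hardy inequality. The only (harmless) difference is that you insert the bound on $W$ before Cauchy--Schwarz rather than after, and you spell out the justification of the integration by parts, which the paper leaves implicit.
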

\begin{proof}
    Take $W(t)\coloneqq \int_t^\infty V(s)\dd s$ and $\lambda\coloneqq\sup_{t>0}t^{-1}\abs{W(t)}$. Using integration by parts, $(|u|^2)^\prime=2\mathrm{Re}\left(u\overline{ u^\prime}\right)$ and Cauchy-Schwartz leads to
    \begin{align*}
        \int_0^\infty V\abs{u}^2\dd r\leq 2\mathrm{Re} \int_0^\infty W u\overline{u^\prime}\dd r
        &\leq 2\left(\int_0^\infty W^2\abs{u}^2\dd r \right)^{1/2}\left(\int_0^\infty \abs{u^\prime}^2\dd r\right)^{1/2}\\
        &\leq 2 \left(\int_0^\infty \frac{\lambda^2}{r^2}\abs{u}^2\dd r\right)^{1/2}\left(\int_0^\infty \abs{u^\prime}^2\dd r\right)^{1/2}.
    \end{align*}
    Applying the standard version of Hardy's inequality, \eqref{eqn:hardyweight}, produces the result. 
\end{proof}
Although the proof is identical to that in \cite{Kats1958CriteriaString}, we note that this inequality typically presupposes $V$ to be positive, except for an analogous formulation presented by Hille and Hartman \cite{Hille1948Non-oscillationTheorems,Hartman1964OrdinaryEquations}. This distinction leads, for instance, to the following bounds which state that for any $\alpha>0$
\begin{equation}\label{eqn:oschardyinequality}
    \int_{0}^\infty r^{\alpha-2}\sin(r^\alpha)|u(r)|^2\dd r\leq \frac{4}{\alpha}\int_{0}^\infty |u^\prime(r)|^2\dd r \text{ for all } u\in C_0^\infty(\R_+).
\end{equation}
These inequalities already indicate the critical nature of the oscillating potentials in Theorem \ref{thm:main}. However, comparing its statement with \eqref{eqn:oschardyinequality} we see that the constant fails to capture the exact coupling value for which these potentials generate finitely many negative eigenvalues. 

The insight offered by Lemma \ref{thm:hille} is that integral conditions, as opposed to pointwise ones, play a pivotal role in grasping the impact of oscillations. This emerges as a fundamental feature of our subsequent analysis.

\section{A ground state representation}\label{sec:groundstate}
In this section we introduce our main tool for studying Schr\"odinger operators with oscillating potentials. Since, we wish to deal with potentials of the type
\begin{equation}\label{eqn:oscillatingpotsec3}
    V(x)=\abs{x}^\beta \sin\abs{x}^\alpha(1+O(1)) \text{ as }\abs{x}\rightarrow\infty, 
\end{equation}
we begin by showing that the Schr\"odinger operators $-\Delta-V$ are well-defined. Subsequently we will show that its negative spectrum is discrete, even in the case where $\beta>0$. We note that this has been shown in \cite{Sasaki2007SchrodingerPotentials}, but we include the argument for the sake of completeness. For convenience, we operate under the assumption that our potentials are locally bounded. 
\begin{proposition}\label{prop:relbdd}
    Suppose that $V\in L^{\infty}_{\mathrm{loc}}(\R^d)$. If  
\begin{equation*}\label{eqn:lemrelformbdcond}
        \sup_{\omega\in \Sph^{d-1}}\abs{\int_r^\infty V(s\omega)\dd s}\rightarrow 0 \text{ as }r\rightarrow \infty,
    \end{equation*}
    then $V$ is form-bounded with respect to $-\Delta$, with relative bound zero.
\end{proposition}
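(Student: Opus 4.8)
The plan is to verify the defining inequality of relative form bound zero directly: for every $\eps>0$ I must produce a constant $C_\eps<\infty$ with
\begin{equation*}
\abs{\int_{\R^d} V\abs{u}^2\dd x}\leq \eps\int_{\R^d}\abs{\nabla u}^2\dd x+C_\eps\int_{\R^d}\abs{u}^2\dd x
\end{equation*}
for all $u\in C_0^\infty(\R^d)$, the general case following by density in $H^1(\R^d)$. Since $V\in L^\infty_\loc$, the contribution of any fixed ball is harmless, so the entire difficulty lies at infinity, where I must extract a \emph{small} constant from the hypothesis. I would therefore fix a large radius $R$ together with a smooth cutoff $\chi$ satisfying $\chi\equiv 0$ on $\{\abs{x}\le R\}$, $\chi\equiv 1$ on $\{\abs{x}\ge 2R\}$, and $\norm[\infty]{\nabla\chi}\le C/R$, and split $V\abs{u}^2=V(1-\chi^2)\abs{u}^2+V\chi^2\abs{u}^2$. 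The first piece is supported in $\{\abs{x}\le 2R\}$, where $\abs{V}\le M_{2R}\coloneqq\norm[L^\infty(B_{2R})]{V}$, hence it is bounded by $M_{2R}\norm{u}^2$ and absorbed into $C_\eps$.

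For the exterior piece I would pass to polar coordinates and, for each $\omega\in\Sph^{d-1}$, apply the integration-by-parts device underlying Lemma \ref{thm:hille} to $g_\omega(r)\coloneqq r^{(d-1)/2}(\chi u)(r\omega)$, which vanishes for $r\le R$ and is compactly supported. Writing $w=\chi u$ and $W_\omega(r)=\int_r^\infty V(s\omega)\dd s$, integration by parts (with no boundary term, since $g_\omega(R)=0$) gives $\int_0^\infty V(r\omega)\abs{g_\omega}^2\dd r=\int_R^\infty W_\omega(r)\,(\abs{g_\omega}^2)'\dd r$. Here is the crucial point: the hypothesis provides only that $\mu(R)\coloneqq\sup_{\omega}\sup_{r\ge R}\abs{W_\omega(r)}\to 0$ as $R\to\infty$, and \emph{not} that $W_\omega(r)=O(1/r)$; consequently Lemma \ref{thm:hille} (and Hardy's inequality) cannot be invoked, as the constant $\sup_r r\abs{W_\omega(r)}$ need not even be finite. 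Instead I would bound $\abs{W_\omega}\le\mu(R)$ pointwise on the support of $g_\omega$ and apply Cauchy--Schwarz, obtaining
\begin{equation*}
\abs{\int_0^\infty V(r\omega)\abs{g_\omega}^2\dd r}\le 2\mu(R)\Big(\int_R^\infty\abs{g_\omega}^2\dd r\Big)^{1/2}\Big(\int_R^\infty\abs{g_\omega'}^2\dd r\Big)^{1/2}.
\end{equation*}

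It then remains to reassemble the angular integrals. A direct computation (integrating by parts in $r$, using that $w$ vanishes near the origin) would yield the weight-removing identity
\begin{equation*}
\int_0^\infty\abs{g_\omega'}^2\dd r=\int_0^\infty\abs{\partial_r w(r\omega)}^2 r^{d-1}\dd r-\frac{(d-1)^2}{4}\int_0^\infty\abs{w(r\omega)}^2 r^{d-3}\dd r\le\int_0^\infty\abs{\partial_r w(r\omega)}^2 r^{d-1}\dd r,
\end{equation*}
so that integrating the previous display over $\Sph^{d-1}$ and applying Cauchy--Schwarz in $\omega$ bounds the exterior piece by $2\mu(R)\norm{w}\,\norm{\nabla w}$. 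Since $\norm{w}\le\norm{u}$ and $\norm{\nabla w}\le\norm{\nabla u}+CR^{-1}\norm{u}$, Young's inequality converts this into $\eps\norm{\nabla u}^2$ plus a multiple of $\norm{u}^2$ once $R$ is taken large enough that $\mu(R)$ is small. Combining with the interior bound gives the claim.

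The main obstacle is conceptual rather than computational: the tail hypothesis controls only the \emph{size} of $W_\omega$, not its decay rate, so the usual route through Hardy's inequality (as in Lemma \ref{thm:hille}) is unavailable and would in any case yield only a fixed relative bound. The device that makes relative bound \emph{zero} accessible is to localise to $\{\abs{x}\ge R\}$ and spend the smallness of $\mu(R)$ directly, at the cost of a harmless lower-order $\norm{u}^2$ term; the cutoff $\chi$ is what keeps the integration by parts boundary-term-free, and one must check that the gradient loss $\norm[\infty]{\nabla\chi}=O(1/R)$ does not spoil the estimate.
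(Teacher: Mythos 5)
Your argument follows essentially the same route as the paper's: split at a large radius $R$, write $V(\cdot\,\omega)=-W_\omega'$ along each ray, integrate by parts, and convert the smallness of $\mu(R)=\sup_{r\ge R,\,\omega}\abs{W_\omega(r)}$ into a small coefficient in front of $\norm{\nabla u}^2$ via Cauchy--Schwarz, with the local piece absorbed into $C_\eps\norm{u}^2$ by local boundedness. The differences are only in the bookkeeping: you use a smooth cutoff and the substitution $g_\omega=r^{(d-1)/2}w$ to eliminate boundary terms and the weight $r^{d-1}$, whereas the paper keeps a sharp cutoff and explicitly estimates the boundary term at $r=R$ and the term produced by $\partial_r(r^{d-1})$. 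Your remark that Lemma \ref{thm:hille} is unavailable here (no $O(1/r)$ decay of $W_\omega$ is assumed, and it would only give a fixed bound anyway) is exactly the right diagnosis.

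One step is wrong as written. For $g_\omega=r^{(d-1)/2}w(r\omega)$ with $w$ vanishing near the origin and compactly supported, expanding $\abs{g_\omega'}^2$ gives $\frac{(d-1)^2}{4}r^{d-3}\abs{w}^2+\frac{d-1}{2}r^{d-2}(\abs{w}^2)'+r^{d-1}\abs{\partial_r w}^2$, and integrating the cross term by parts contributes $-\frac{(d-1)(d-2)}{2}\int r^{d-3}\abs{w}^2\dd r$, so the correct identity is
\begin{equation*}
\int_0^\infty\abs{g_\omega'}^2\dd r=\int_0^\infty\abs{\partial_r w(r\omega)}^2 r^{d-1}\dd r-\frac{(d-1)(d-3)}{4}\int_0^\infty\abs{w(r\omega)}^2 r^{d-3}\dd r,
\end{equation*}
with coefficient $(d-1)(d-3)/4$ rather than your $(d-1)^2/4$ (consistent with the effective potential $\frac{(d-1)(d-3)}{4r^2}$ appearing in the radial decompositions later in the paper). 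Consequently the inequality $\int\abs{g_\omega'}^2\dd r\le\int\abs{\partial_r w}^2 r^{d-1}\dd r$ that you rely on is \emph{false} for $d=2$, where $(d-1)(d-3)=-1$ and the extra term is strictly positive. The fix is routine: since $w$ is supported in $\{r\ge R\}$, one has $r^{d-3}\le R^{-2}r^{d-1}$ there, so the offending term is bounded by $\frac{1}{4R^2}\int\abs{w}^2r^{d-1}\dd r$ and, after the Cauchy--Schwarz and Young steps, lands in the $C_\eps\norm{u}^2$ part of the estimate rather than being discarded. With that correction the proof is complete in all dimensions.
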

\begin{proof}
    Let $u\in C^\infty_0(\R^d)$, then for any $R<\infty$,
    \begin{align*}
        \left\vert\int_{\R^d}V|u|^2\dd x\right\vert
        &\leq \norm[L^\infty(B_R)]{V}\int_{B_R}\abs{u}^2\dd x+\left\vert\int_{B_R^c}V|u|^2\dd x\right\vert.
    \end{align*}
    To bound the second term we work in spherical coordinates $(r,\omega)\in \R_+\times\Sph^{d-1}$, $r=|x|$ and $\omega=x/r$. Let $W(r\omega)\coloneqq \int_r^\infty V(s\omega)\dd s$, then
    \begin{align*}
        \left\vert\int_{B_R^c}V|u|^2\dd x\right\vert=&\abs{\int_{\Sph^{d-1}}\int_R^\infty V(r\omega)|u(r\omega)|^2 r^{d-1}\dd r\dd \omega}\\
        \leq &\int_{\Sph^{d-1}}\abs{\int_R^\infty W(r\omega)\partial_r(|u(r\omega)|^2)r^{d-1}\dd r}\dd \omega\\&+\int_{\Sph^{d-1}}\abs{W(R\omega)}|u(R\omega)|^2 R^{d-1}\dd \omega+\varepsilon_RR^{-1}(d-1)\norm{u}_{L^2(\R^d)}^2,
    \end{align*}
    where $\varepsilon_R\coloneqq \sup_{r>R,\omega\in\Sph^{d-1}}\abs{W(r\omega)}$. Label by $(1)$ and $(2)$ the first and second term in the last line.
    
    Using Cauchy-Schwartz and the trivial inequality $2ab\leq a^2+b^2$, for $a,b\geq 0$, we see that
    \begin{align*}
       (1)
        &\leq 2\int_{\Sph^{d-1}}\abs{\mathrm{Re}\int_R^\infty Wu\overline{\partial_ru} r^{d-1}\dd r}\dd \omega
        \\&\leq 2\int_{\Sph^{d-1}}\left(\int_R^\infty W(r\omega)^2\abs{u(r\omega)}^2r^{d-1}\dd r\right)^{1/2}\left(\int_R^\infty\abs{\partial_ru(r\omega)}^2 r^{d-1}\dd r\right)^{1/2}\dd \omega\\
        &\leq \varepsilon_R\int_{\Sph^{d-1}}\int_R^\infty\left(\abs{\partial_r u}^2+\abs{u}^2 \right) r^{d-1}\dd r\dd \omega.
    \end{align*}
    Then for the second term,  
    \begin{align*}
        (2) &\leq \varepsilon_R\int_{\Sph^{d-1}}\abs{\int_{R}^\infty \partial_r\left(\abs{u(r\omega)}^2r^{d-1}\right)\dd r}\dd \omega\\
        &\leq \varepsilon_R\int_{\Sph^{d-1}}\abs{\int_{R}^\infty \partial_r\left(\abs{u(r\omega)}^2\right)r^{d-1}\dd r}+\varepsilon_RR^{-1}(d-1)\int_R^\infty \abs{u(r\omega)}^2r^{d-1}\dd r\dd \omega,
    \end{align*}
    which we can bound by using the same approach as for $(1)$.
    
    Putting this together we find that 
    \begin{align*}
        \left\vert\int_{\R^d}V|u|^2\dd x\right\vert
        &\leq 2\varepsilon_R\int_{\R^d}\abs{\nabla u}^2\dd x+\left(2(d-1)\varepsilon_RR^{-1}+2\varepsilon_R+\norm[L^\infty(B_R)]{V}\right)\int_{\R^d}\abs{u}^2\dd x.
    \end{align*}
    Under the given assumptions of $V$, we can choose $R$ so that $\varepsilon_R$ is arbitrarily small. This concludes the result.
\end{proof}
Then it is clear the operators considered in Theorem \ref{thm:main} are well-defined in the sense of quadratic forms with $C_0^\infty(\R^d)$ as their form core. This follows more generally for the potentials \eqref{eqn:oscillatingpotsec3} with $\alpha>0$ and $\alpha-\beta>1$, since for any $\omega\in \Sph^{d-1}$
\begin{align*}
    \int_r^\infty V(s\omega)\dd s=\alpha^{-1}r^{1+\beta-\alpha}\cos r^\alpha(1+O(1))\text{ as }r\rightarrow \infty.
\end{align*}
\begin{proposition}\label{prop:essspectrum}
    Let $V\in L^\infty_{\mathrm{loc}}(\R^d)$ satisfy the condition of Proposition \ref{prop:relbdd}, then the essential spectrum of $-\Delta-V$ in $L^2(\R^d)$ coincides with $[0,\infty)$.
\end{proposition}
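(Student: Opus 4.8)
The plan is to show that $V$ defines a relatively form-compact perturbation of $-\Delta$, from which the invariance of the essential spectrum under such perturbations immediately yields $\sigma_{\mathrm{ess}}(-\Delta-V)=\sigma_{\mathrm{ess}}(-\Delta)=[0,\infty)$. Writing $A=-\Delta$, it suffices to prove that the bounded self-adjoint operator
\[
    B\coloneqq (A+1)^{-1/2}\,V\,(A+1)^{-1/2},
\]
defined through the quadratic form of $V$ (which by Proposition \ref{prop:relbdd} is $A$-form-bounded with relative bound zero), is compact. Indeed, once $B$ is compact the resolvent difference $(-\Delta-V+1)^{-1}-(-\Delta+1)^{-1}$ is compact and Weyl's theorem gives the claim. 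This single statement handles both inclusions at once, which is cleaner than constructing Weyl sequences for $[0,\infty)\subseteq\sigma_{\mathrm{ess}}$ and separately ruling out negative essential spectrum.

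To prove compactness of $B$ I would split the potential at radius $R$ as $V=V\chi_{B_R}+V\chi_{B_R^c}$ and treat the two pieces separately. For the inner piece $V\chi_{B_R}\in L^\infty$ with compact support, the operator $(A+1)^{-1/2}V\chi_{B_R}(A+1)^{-1/2}$ is compact: the factor $(A+1)^{-1/2}$ maps $L^2(\R^d)$ boundedly into $H^1(\R^d)$, while multiplication by the bounded, compactly supported $V\chi_{B_R}$ factors through the compact embedding $H^1(\R^d)\hookrightarrow L^2(B_R)$ (Rellich--Kondrachov). For the outer piece I would revisit the estimate in the proof of Proposition \ref{prop:relbdd}, which shows $\big|\int_{B_R^c}V|u|^2\dd x\big|\leq 2\varepsilon_R\norm{\nabla u}^2+c_R\norm{u}^2$ with $c_R=2(d-1)\varepsilon_R R^{-1}+2\varepsilon_R$ and $\varepsilon_R=\sup_{r>R,\,\omega}|W(r\omega)|\to 0$. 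Substituting $u=(A+1)^{-1/2}f$ and using the identity $\norm{\nabla u}^2+\norm{u}^2=\norm{f}^2$, this bounds the operator norm of $(A+1)^{-1/2}V\chi_{B_R^c}(A+1)^{-1/2}$ by a constant multiple of $\varepsilon_R$.

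Combining the two, $B$ is approximated in operator norm by the compact operators $(A+1)^{-1/2}V\chi_{B_R}(A+1)^{-1/2}$, with error $O(\varepsilon_R)$ tending to zero as $R\to\infty$; hence $B$ is a norm limit of compact operators and is therefore compact. Invoking the stability of the essential spectrum under relatively form-compact perturbations then completes the proof.

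The step I expect to be the main, if mild, obstacle is justifying that the tail estimate from Proposition \ref{prop:relbdd} genuinely controls the operator norm of the outer piece. That estimate was derived for $u\in C_0^\infty(\R^d)$ and, en route, produced a boundary term on $\partial B_R$; one must confirm it passes to all $u\in H^1(\R^d)$ by density and delivers the clean bound $|\langle u,V\chi_{B_R^c}u\rangle|\leq C\varepsilon_R\norm{u}_{H^1}^2$ with a constant depending only on $R$ and $d$. Once this is in place, the Rellich argument for the inner piece and the appeal to Weyl's theorem are routine.
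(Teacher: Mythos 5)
Your proposal is correct and is in substance the same argument as the paper's: both establish that $V$ is form-compact relative to $-\Delta$ by combining the tail estimate from Proposition \ref{prop:relbdd} (which controls the $B_R^c$ part by $O(\varepsilon_R)\norm[H^1]{u}^2$) with the Rellich--Kondrachov compactness on $B_R$, and then invoke Weyl's theorem. The paper phrases this as weak continuity of the quadratic form along weakly null $H^1$-sequences rather than as norm-compactness of $(A+1)^{-1/2}V(A+1)^{-1/2}$, but these are equivalent formulations for a self-adjoint sandwich, so the difference is only one of packaging.
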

\begin{proof}
    By Weyl's Theorem it suffices to show that for any sequence $\{u_k\}_{k=1}$ which converges weakly to zero in $H^1(\R^d)$ that $\int_{\R^d}V\abs{u_k}^2\dd x\rightarrow 0$. 
    
    We have shown in the proof of Proposition \ref{prop:relbdd} that for any $\varepsilon>0$ we can find $R<\infty$ such that for all $u\in H^1(\R^d)$
    \begin{align*}
        \left\vert\int_{\R^d}V|u|^2\dd x\right\vert
        &\leq \varepsilon\int_{\R^d}\abs{\nabla u}^2\dd x+\varepsilon\int_{\R^d}\abs{u}^2\dd x+\norm[L^\infty(B_R)]{V}\int_{B_R}|u|^2\dd x.
    \end{align*}
    Thus, it follows that
    \begin{align*}
        \limsup_{k\rightarrow \infty}\left\vert\int_{\R^d}V|u_k|^2\dd x\right\vert
        &\leq \varepsilon\limsup_{k\rightarrow \infty}\norm[H^1(\R^d)]{u_k}\leq \varepsilon\sup_{k\geq 1}\norm[H^1(\R^d)]{u_k},
    \end{align*}
    where we have used that $\chi_{B_R}u_k\rightarrow 0$ in $L^2(B_R)$ (see \cite[Proposition 2.36]{Frank2022SchrodingerInequalities}). Since we can bound the supremum on the right by the Banach-Steinhaus Theorem and choose any $\varepsilon>0$ the assertion holds.
\end{proof}

Now we are ready to introduce a ground state representation for the operators we have just defined. The approach we detail was used by Combescure and Ginibre in \cite{Combescure1976SpectralPotentials,Combescure1980SpectralPotentials}, where they also investigated oscillating potentials. Among their results is a version of the three-dimensional Birman--Schwinger bound for $N_0(V)$ in terms of a function $W$ which satisfies $\nabla\cdot W=-V$. However, they did not consider more qualitive conditions for the finiteness of $N_0(V)$. 

Consider the operator $H_0\coloneqq \lapr-V \text{ in }L^2(\R_+)$
with Dirichlet boundary conditions at zero. Let $W$ be a measurable function satisfying $W^\prime=-V$, e.g. $W(r)=\int_r^{\infty} V(s)\dd s$. Under the assumption that $V\in L^\infty_{\mathrm{loc}}(\R^d)$ and $W(r)$ decays to zero, $H_0$ is well-defined, as demonstrated in Proposition \ref{prop:relbdd}. 

If we introduce the operator $D=\frac{\mathrm{d}}{\mathrm{d}x}-W$ with domain $H_0^1(\R_+)$. Then, the operator $\widetilde H_0\coloneqq H_0+W^2$ factorises as $D^\ast D=\widetilde H_0$. To see this, note that for any $u\in H^1_0(\R_+)$,
\begin{equation*}
    (D^\ast Du,u)_{L^2(\R_+)}=\norm{D u}_{L^2(\R_+)}^2=\int_0^\infty |u^\prime-W u|^2\dd r,
\end{equation*}
which, by the expansion
\begin{align*}
    |u^\prime-W u|^2&=|u^\prime|^2-u^\prime \overline{W u}-Wu\overline{u^\prime}+W^2|u|^2\\
    &=|u^\prime|^2-W(|u|^2)^\prime+W^2|u|^2,
\end{align*}
leads to the equality in the sense of quadratic forms.

To obtain a ground state representation, suppose that $U$ is some measurable function satisfying $U^\prime =W$, e.g. $U(x)=\int_0^r W(s)\dd s$. Then $e^{U}$ serves as an effective ground state for $\widetilde H_0$, leading to
\begin{align*}
    (\widetilde H_0 e^{U}u,e^{U}u)_{L^2(\R_+,\dd r)}=\int_0^\infty|(ue^{U})^\prime-(e^{U})^\prime u|^2\dd r=\int_0^\infty |u^\prime|^2 e^{2U}\dd r.
\end{align*}
Consider the unitary transformation $\mathcal{U}\colon  L^2(\R_+,e^{2U}\dd r)\rightarrow L^2(\R_+,\dd r)$ given by $\mathcal{U}u=e^{U}u$. Then we have demonstrated  that
\begin{equation*}
    \mathcal{U}^{-1}H_0 \ \mathcal{U}=-e^{-2U}\frac{\mathrm{d}}{\mathrm{d}r}e^{2U}\frac{\mathrm{d}}{\mathrm{d}r}-W^2 \text{ in }L^2(\R_+,e^{2U}\dd r),
\end{equation*}
where the right side is a Sturm-Liouville operator with Dirichlet conditions at $0$.

If, more generally, we consider the operator $H_{\sigma,R}=\lapr-V$ on $L^2(R,\infty)$ with Robin boundary conditions, $u^\prime(R)-\sigma u(R)=0$. Then under the same unitary transform it follows that
\begin{equation*}
    \mathcal{U}^{-1}H_{\sigma,R}\ \mathcal{U}=-e^{-2U}\frac{\mathrm{d}}{\mathrm{d}r}e^{2U}\frac{\mathrm{d}}{\mathrm{d}r}-W^2 \text{ in }L^2((R,\infty),e^{2U}\dd r),
\end{equation*}
where the Sturm-Liouville operator has Robin boundary conditions with coefficient $\widetilde{\sigma}=e^{2U(R)}(\sigma-W(R))$. Namely it corresponds to the quadratic form
\begin{equation*}
    \int_R^\infty (|u^\prime|^2-W^2|u|^2) e^{2U}\dd r+\widetilde{\sigma}\abs{u(R)}^2.
\end{equation*}

Now in application, suppose that $U$ decays to zero at infinity. Then the operators above become asymptotically equivalent to a Schr\"odinger operator with potential $W^2$. Then, seemingly we can apply conditions like \eqref{eqn:finitecond} and \eqref{eqn:infinitecond} to $W^2$. The core of the subsequent theorem is to employ and iterate this idea twice.
\newpage
\begin{theorem}\label{thm:criticcoupl}
Let \( V \in L^\infty_{\mathrm{loc}}(\R^d) \) satisfy
\begin{equation}\label{eqn:thmpotentialscrit}
V(x) = \lambda |x|^\beta \sin |x|^\alpha + o((|x|\ln|x|)^{-2}) \quad \text{as } |x| \to \infty,
\end{equation}
where \( \alpha > 0 \), \( \alpha-\beta > 1 \), and \( 2\alpha-\beta > 2 \). Then the negative spectrum of \( -\Delta - V \):
\begin{enumerate}[left=0.5cm]
    \item Consists of finitely many negative eigenvalues if either of the following holds:
    \begin{enumerate}[left=0.5cm]
        \item \( \alpha-\beta > 2 \) for any \( \lambda \in \R \),
        \item \( \alpha-\beta = 2 \) and \( |\lambda| \leq \alpha |d-2|/\sqrt{2} \).
    \end{enumerate}
    \item Consists of infinitely many negative eigenvalues, accumulating at zero, if either of the following holds:
    \begin{enumerate}[left=0.5cm]
        \item \( \alpha-\beta < 2 \) for any \( \lambda \in \R \backslash \{0\} \),
        \item \( \alpha-\beta = 2 \) and \( |\lambda| > \alpha |d-2|/\sqrt{2} \).
    \end{enumerate}
\end{enumerate}
\end{theorem}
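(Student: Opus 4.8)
The plan is to reduce the $d$-dimensional problem to a family of half-line operators indexed by the spherical harmonics, to apply the ground state representation of Section~\ref{sec:groundstate} \emph{twice} in order to replace the oscillatory potential by an explicit non-oscillating one, and then to read off the three cases from the logarithmic Hardy inequalities of Section~\ref{sec:hardy}. First I would separate variables: writing $u(x)=r^{-(d-1)/2}\psi(r)Y_k(\omega)$ with $Y_k$ a spherical harmonic for the eigenvalue $\Lambda_k$ of $-\Delta_{\Sph^{d-1}}$, the operator $-\Delta-V$ decomposes into the half-line operators
\[
    \lapr + \frac{c_k}{r^2} - V \quad \text{on } L^2(\R_+,\dd r), \qquad c_k=\frac{(d-1)(d-3)}{4}+\Lambda_k,
\]
with Dirichlet condition at $0$, noting the key identity $c_k+\tfrac14=\tfrac{(d-2)^2}{4}+\Lambda_k$. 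As in Section~\ref{sec:hardy}, I would use Robin (resp.\ Dirichlet) bracketing at a radius $R$ to reduce finiteness (resp.\ infiniteness) of $N_0(V)$ to the exterior operators on $(R,\infty)$. Since $c_k\to\infty$, once the effective attractive potential is $O(r^{-2})$ only finitely many channels can carry negative spectrum, so the total count is governed channel-by-channel, with the threshold decided by the smallest value $\Lambda_k=0$.

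Next I would compute the asymptotics feeding the representation. With $W(r)=\int_r^\infty V$, integration by parts together with $\alpha-\beta>1$ gives $W(r)=\tfrac{\lambda}{\alpha}\,r^{\beta-\alpha+1}\cos r^\alpha\,(1+o(1))$, and $U=\int_0^r W$ decays precisely because $2\alpha-\beta>2$; hence the weight $e^{2U}\to 1$ and the first transformation makes each channel unitarily equivalent, up to the asymptotically trivial weight and a boundary term absorbed by bracketing, to the Sturm--Liouville operator with potential $c_k/r^2-W^2$. Expanding $W^2=\tfrac{\lambda^2}{2\alpha^2}\,r^{2(\beta-\alpha+1)}(1+\cos 2r^\alpha)(1+o(1))$ isolates an averaged part $P=\tfrac{\lambda^2}{2\alpha^2}\,r^{2(\beta-\alpha+1)}$ and an oscillatory part $Q=P\cos 2r^\alpha$. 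Applying the representation a second time, now to $Q$ alone, replaces $-Q$ by $-W_Q^2$ with $W_Q=\int_r^\infty Q$; the hypothesis $2\alpha-\beta>2$ again forces both the new weight to tend to $1$ and $W_Q^2=o(r^{-2})$. The upshot is that, up to asymptotically trivial weights and a finite number of eigenvalues, channel $k$ is governed by
\[
    \lapr + \frac{c_k}{r^2} - \frac{\lambda^2}{2\alpha^2}\,r^{2(\beta-\alpha+1)} + o(r^{-2}).
\]

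Finally I would compare this with the half-line inequalities of Section~\ref{sec:hardy}. If $\alpha-\beta>2$ the attractive term is $o(r^{-2})$ and the effective potential lies below $\tfrac{1}{4r^2}+\tfrac{1}{4r^2(\ln r)^2}$ for large $r$, so Lemma~\ref{lem:robinhardylog} yields finiteness in every channel, hence for $-\Delta-V$ and all $\lambda$. If $\alpha-\beta=2$ the coefficient of $r^{-2}$ in the attractive potential equals $\tfrac{\lambda^2}{2\alpha^2}-c_k$, whose critical comparison $\tfrac{\lambda^2}{2\alpha^2}-c_k\lessgtr\tfrac14$ is exactly $\tfrac{\lambda^2}{2\alpha^2}\lessgtr\tfrac{(d-2)^2}{4}+\Lambda_k$; the borderline channel $\Lambda_k=0$ produces the stated threshold $\abs{\lambda}=\alpha\abs{d-2}/\sqrt2$, with finiteness following from Lemma~\ref{lem:robinhardylog} in the $\le$ case and infiniteness from Lemma~\ref{lem:reversehardy} (applied with $\rho=0$, support pushed far out) in the $>$ case. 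If $\alpha-\beta<2$, then $2(\beta-\alpha+1)\in(-2,0)$ by $\alpha-\beta>1$, so the attractive part dominates every $O(r^{-2})$ term and, for any $\lambda\neq 0$, exceeds $\tfrac{1}{4r^2}+\tfrac{1+\varepsilon}{4r^2(\ln r)^2}$ far out; Lemma~\ref{lem:reversehardy} then gives infinitely many eigenvalues, which by Proposition~\ref{prop:essspectrum} can accumulate only at $0$.

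The main obstacle I anticipate is the error bookkeeping in the critical regime $\alpha-\beta=2$. There I must verify that the hypothesis error $o((\abs{x}\ln\abs{x})^{-2})$, together with all lower-order contributions produced by the two antiderivatives and the squaring --- in particular the cross term $2W\,\delta W$ arising from perturbing $W$ --- remains $o((r\ln r)^{-2})$, so that it cannot disturb the logarithmically refined Hardy threshold captured by Lemmas~\ref{lem:robinhardylog} and~\ref{lem:reversehardy}. The accompanying technical points are to confirm that the two weight factors $e^{2U},e^{2U_Q}\to 1$ and the transformed Robin boundary coefficients alter the eigenvalue count by at most a finite amount, and that transferring trial functions back through the unitaries $u\mapsto e^{-U}u$ preserves the strict form inequalities.
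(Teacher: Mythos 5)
Your proposal is correct and its skeleton --- separation into spherical harmonics, Robin/Dirichlet bracketing at a large radius, two successive applications of the ground state representation, and a final comparison with Lemmas \ref{lem:robinhardylog} and \ref{lem:reversehardy} --- coincides with the paper's. The genuinely different step is the \emph{second} application of the representation. The paper transforms away the entire effective potential $\lambda^2\alpha^{-2}r^{2\beta-2\alpha+2}\cos^2 r^\alpha$; in the critical case $\alpha-\beta=2$ the resulting antiderivative $\widetilde U=\tfrac{\lambda^2}{2\alpha^2}\ln(r/R)+O(1)$ grows, which produces the power weight $r^{\lambda^2/\alpha^2}$ and the attractive term $-\lambda^4/(4\alpha^4r^2)$, and the threshold is then read off from Lemma \ref{lem:robinhardylog} with $\rho=\lambda^2/\alpha^2$. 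You instead split $\cos^2=\tfrac12(1+\cos 2r^\alpha)$ and transform only the zero-mean part, keeping the averaged $-\tfrac{\lambda^2}{2\alpha^2}r^{2(\beta-\alpha+1)}$ as an explicit potential and comparing with $\rho=0$. Passing the paper's weighted form back to $L^2(\dd r)$ via $v=r^{\rho/2}u$ shows the two effective $r^{-2}$ coefficients agree, so both routes yield the threshold $\abs{\lambda}=\alpha\abs{d-2}/\sqrt2$. Your variant buys a cleaner case (2a): the paper must distinguish $\alpha-\beta\gtrless 3/2$ according to whether $\int^\infty\cos^2(s^\alpha)\,s^{2\beta-2\alpha+2}\dd s$ converges at infinity, a case split you avoid entirely because $\int_r^\infty Q\,\dd s$ always converges by integration by parts against the oscillation.

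Two caveats. First, your blanket claims that $W_Q^2=o(r^{-2})$ and $e^{2U_Q}\to1$ require $3\alpha-2\beta>4$, which follows from the hypotheses only when $\alpha-\beta\geq 2$; for example $\alpha=1.3$, $\beta=0.1$ satisfies all assumptions of the theorem yet makes $U_Q$ unbounded. This does not break case (2a): there the relevant estimate is $W_Q^2=o(P)$, which is precisely the hypothesis $2\alpha-\beta>2$, and a positive weight cannot change the sign of a form whose potential part dominates on scaled, disjointly supported test functions --- but the statement should be split into these two regimes rather than asserted uniformly. Second, the bookkeeping you flag at critical coupling is the genuinely delicate point: since the Hardy constants are sharp, the discrepancy between the weighted kinetic form $\int\abs{u'}^2e^{2U}\dd r$ and its unweighted counterpart must be absorbed, together with all lower-order pieces of $W$, $U$ and the cross terms from squaring, into the $\varepsilon(r\ln r)^{-2}$ error; the paper handles this tersely (``we have moved the lower order parts of $W$ and $U$ onto the logarithmic term''), and your write-up would need to make this absorption explicit at the same point. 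Neither caveat invalidates the approach.
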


\begin{proof}
    For any $\varepsilon\in (0,1/4)$ we can choose $R<\infty$ sufficiently large such that 
    \begin{align}\label{eqn:asymproofthm}
        |V(x)-\lambda\abs{x}^\beta\sin\abs{x}^\alpha|<\frac{\varepsilon}{3}(\abs{x}\ln\abs{x})^{-2} \text{ for all }|x|>R. 
    \end{align}
    
    We start by showing $(1)$. Using \eqref{eqn:asymproofthm} we can bound the operator from below by replacing $V$ with
    \begin{equation*}
        \chi_{B_R}(x)V+\chi_{B_R^c}(x)\left(\lambda\abs{x}^\beta\sin\abs{x}^\alpha+\frac{\varepsilon}{3}(\abs{x}\ln\abs{x})^{-2}\right).
    \end{equation*}
    Then, following the argument in Section \ref{sec:hardy} we reduce this operator further by imposing Robin boundary conditions on $\partial B_R$. Let $H_{\sigma,R}^{-}$ and $H_{\sigma,R}^{+}$ denote the respective restrictions of this reduced operator on $L^2(B_R)$ and $L^2(\overline{B_R}^c)$ with corresponding forms \eqref{eqn:robinops}. Then 
    \begin{equation*}
        N_0(V)\leq N_0(H_{\sigma,R}^{-})+N_0(H_{\sigma,R}^{+}),
    \end{equation*}
    where from $V\in L^\infty_{\mathrm{loc}}(\R^d)$ it follows that $N_0(H_{\sigma,R}^{-})<\infty$. 
    
    Changing to polar coordinates we can use separation of variables in the eigenbasis of $-\Delta_{\Sph^{d-1}}$ corresponding to the eigenvalues $\{\Lambda_k\}_{k=1}$. It follows that $H^+_{R,\sigma}$ can be written as the direct sum
    \begin{align*}
        \bigoplus_{k=1}\left(-\frac{\mathrm{d}^2}{\mathrm{d}r^2}+\frac{4\Lambda_k+(d-1)(d-3)}{4r^2}-\lambda r^\beta\sin r^\alpha- \frac{\varepsilon}{3}(r\ln r)^{-2} \right)
    \end{align*}
    where each are considered on $L^2((R,\infty),\dd r)$ with Robin boundary coefficient $\widetilde\sigma\coloneqq (1-d)/2R+R^{1-d}\sigma$. We denote each of these by $h^{(k)}_{\sigma,R}$ and note that
    \begin{equation*}
        N_0\left(H^{+}_{\sigma,R}\right)= \sum_{k=1} N_0\left(h^{(k)}_{\sigma,R}\right).
    \end{equation*}
    
    Now we reduce each of these operators using the ground state representation above. Most importantly, we apply it only with respect to the oscillatory part of the potential. For $\alpha-\beta>1$, as $r\rightarrow \infty$, we have 
    \begin{align*}
        W(r)=\int_r^\infty \lambda s^\beta\sin s^\alpha\dd s=\frac{\lambda \cos r^\alpha}{ \alpha r^{\alpha-\beta-1}}+O(r^{1+\beta-2\alpha}),
    \end{align*}
    and if $2\alpha-\beta>2$, then 
    \begin{align*}
        U(r)=-\int_r^\infty W(s)\dd s=\frac{\lambda \sin r^\alpha}{\alpha^2 r^{2\alpha-\beta-2}}+O(r^{2+\beta-3\alpha}).
    \end{align*}
    From this asymptotic behaviour, we can enlarge $R$ so that each $h^{(k)}_{\sigma,R}$ is unitarily equivalent, via $\mathcal{U}\phi=e^{U}\phi$, to an operator which can be bounded from below by
    \begin{equation*}
        \widetilde{h}^{(k)}_{\sigma,R}\coloneqq -\frac{\mathrm{d}^2}{\mathrm{d}r^2}+\frac{4\Lambda_k+(d-1)(d-3)}{4r^2}-\frac{\lambda^2 (\cos r^\alpha)^2}{ \alpha^2 r^{2\alpha-2\beta-2}}-\frac{2\varepsilon}{3}(r\ln r)^{-2}.
    \end{equation*}
    Where the above are considered in $L^2(R,\infty)$ with Robin boundary coefficient $\widetilde{\sigma}-W(R)$. Note that we have moved the lower order parts of $W$ and $U$ onto the logarithmic term. Thus
    \begin{equation*}
        N_0(H_{\sigma,R}^+)\leq \sum_{k=1}N_0\left(\widetilde{h}^{(k)}_{\sigma,R}\right).
    \end{equation*}

    At this stage if $\alpha-\beta>2$ then by choosing $R$ and $\sigma$ suitably large, we can apply Lemma \ref{lem:robinhardylog} with $\rho=0$ to each of these operators uniformly. Indeed we only have to check $k=1$ where $\Lambda_1=0$. Then each generates no negative eigenvalues and it follows that $N_0(V)<\infty$. 
    
    For $(1)$, it remains to consider the case where $\alpha-\beta=2$.  To do so, we repeat the argument with respect to the operators $\widetilde{h}_{\sigma,R}^{(k)}$. Note that
    \begin{align*}
        \widetilde{W}(r)&\coloneqq \int_r^\infty \frac{\lambda^2\cos(s^\alpha)^2}{\alpha^2 s^{2}}\dd s=\frac{\lambda^2}{\alpha^2}\left(\frac{1}{2r}\right)+O(r^{-(\alpha+1)}),
    \end{align*}
    and
    \begin{align*}
        \widetilde{U}(r)\coloneqq \int_R^r\widetilde{W}(s)\dd s=\frac{\lambda^2}{2\alpha^2}
                \ln (r/R)+O(1).
    \end{align*}
    Thus, after taking $R$ to be sufficiently large and applying the ground state representation, we see that each of the operators $\widetilde{h}^{(k)}_{\sigma,R}$ are unitarily equivalent to an operator bounded from below by
    \begin{equation*}
        r^{-\lambda^2/\alpha^2}\frac{\mathrm{d}}{\mathrm{d}r}r^{\lambda^2/\alpha^2}\frac{\mathrm{d}}{\mathrm{d}r}+\frac{4\Lambda_k+(d-1)(d-3)}{4r^2}-\frac{\lambda^4}{4\alpha^4 r^2}-\varepsilon(r\ln r)^{-2},
    \end{equation*}
    in $L^2((R,\infty),r^{\lambda^2/\alpha^2}\dd r)$ with Robin boundary conditions $(\widetilde{\sigma}-W(R)-W^2(R))$. Then after choosing $\sigma$ suitably large we can apply Lemma \ref{lem:robinhardylog} with $\rho=\lambda^2/\alpha^2$. It follows that the operators are uniformly positive if $\lambda^2/\alpha^2\leq (d-2)^2/2$. Which concludes the statement $(1)$.

    To prove $(2)$ we apply similar reasoning. Starting with \eqref{eqn:asymproofthm} we can bound the operator from above by making the potential smaller. Then we use bracketing and study the part of this operator restricted to $\overline{B_R}^c$ with Dirichlet conditions. If we denote this Dirichlet operator by $H^{+}_{\infty,R}$ then it follows that 
    \begin{equation*}
        N_0(V)\geq N_0(H^{+}_{\infty,R}). 
    \end{equation*}
    Using separation of variables, this operator can be decomposed into the direct sum of
    \begin{align*}
        \bigoplus_{k=1}\left(-\frac{\mathrm{d}^2}{\mathrm{d}r^2}+\frac{4\Lambda_k+(d-1)(d-3)}{4r^2}-\lambda r^\beta\sin r^\alpha+\frac{\varepsilon}{3}(r\ln r)^{-2} \right),
    \end{align*}
    each with Dirichlet boundary conditions on $L^2(R,\infty)$. 

    In the case where $\alpha-\beta=2$, we use the ground state representation twice in the same way as before. Then each of the operators can be bound from above by
    \begin{equation*}
        r^{-\lambda^2/\alpha^2}\frac{\mathrm{d}}{\mathrm{d}r}r^{\lambda^2/\alpha^2}\frac{\mathrm{d}}{\mathrm{d}r}+\frac{4\Lambda_k+(d-1)(d-3)}{4r^2}-\frac{\lambda^4}{4\alpha^4 r^2}+\varepsilon(r\ln r)^{-2},
    \end{equation*}
    in $L^2((R,\infty),r^{\lambda^2/\alpha^2}\dd r)$ with Dirichlet conditions. Then if $\lambda^2/\alpha^2>(d-2)^2/2$ we can apply Lemma \ref{lem:reversehardy} to show that the operator corresponding to $k=1$ produces an infinite number of negative eigenvalues, proving statement $(b)$.
    
    Then it remains to prove $(2a)$ where $\alpha-\beta<2$. In this case we apply the ground-state representation twice with a modification to $\widetilde{W}$ in the second step. If $\alpha-\beta\neq 3/2$ then we have that 
    \begin{align*}
        \widetilde{W}(r)=\frac{\lambda^2}{\alpha^2}\left(\frac{1}{\abs{4 \alpha-4 \beta-6}}\right)r^{-2\alpha+2\beta+3}(1+o(1))
    \end{align*}
    where we either take $\int_r^\infty \cdot \dd s$ or $-\int_R^r \cdot \dd s$ with the integrand as above. Whereas if $\alpha-\beta=3/2$ we use the latter form and find that
    \begin{align*}
        \widetilde{W}(r)=\frac{\lambda^2}{2\alpha^2}\ln(r)(1+o(1)). 
    \end{align*}
    In either case we take $\widetilde{U}(r)=\int_R^r \widetilde{W}(s)\dd s$ as before. 
    
    Consider just the $k=1$ component of the Dirichlet operator. Then from the above it is unitarily equivalent to an operator with quadratic form 
    \begin{equation*}
        \int_R^\infty \left(\abs{u^\prime}^2+\left(\frac{(d-1)(d-3)}{4r^2}+\varepsilon(r\ln r)^{-2}\right)\abs{u}^2-\widetilde{W}(r)^2\abs{u}^2\right)e^{2\widetilde{U}(r)}\dd r.
    \end{equation*}
    The leading term of the potential $\widetilde{W}(r)^2$ is strictly positive and asymptotically homogeneous of degree strictly larger than $-2$. Then for any positive $u\in C_0^\infty(R,\infty)$ we can scale it to ensure that the form above is strictly negative (see \cite[Theorem XIII.6]{Reed1978IV:Operators}). Similar to the proof of Lemma \ref{lem:reversehardy} this leads to an infinite dimensional subspace of $L^2((R,\infty),e^{2\widetilde{U}}\dd r)$ for which the form is negative. This concludes the proof of the statement $(2a)$. 
\end{proof}

We finish this section with a couple of remarks. 

\begin{remark}    
    In the statement of the theorem we can consider potentials which oscillate like $\lambda \abs{x}^\beta\sin\mu\abs{x}^\alpha$. Scaling shows that the statement remains the same where in the critical case the conditions on $\abs{\lambda}$ are substituted with those on $\abs{\lambda/\mu}$. 
    
    Another generalisation can be made with respect to the error $o((\abs{x}\ln\abs{x})^{-2})$. We can add to $V$ in \eqref{eqn:thmpotentialscrit} any $o(\cdot)$ correction of the form
    \begin{equation*}
        \frac{\abs{x}^\beta \sin \eta \abs{x}^\alpha}{\ln \abs{x}}, \ \eta>0,
    \end{equation*}
    without changing the result. This follows by encorporating this term in the ground state representation. 
\end{remark}

\begin{remark}
    For the other rapidly oscillating example alluded to in \cite{Raikov2016DiscretePotentials,Sasaki2007SchrodingerPotentials} which asymptotically looks like 
    \begin{equation*}
         V(x)=\lambda e^{\abs{x}}\abs{x}^{-2}\sin\left(e^{\abs{x}}\right) \text{ as }\abs{x}\rightarrow \infty,
    \end{equation*}
    it is clear from the above that it has finitely many negative eigenvalues for any coupling $\lambda$. Indeed, it is apparent that our methods would apply to a more general class of rapidly oscillating potentials.
\end{remark}

\section{Proof of Theorem \ref{thm:main}}\label{sec:mainproof}

In this section, we combine the tools from the previous two sections with the method used by Hassell and Marshall in \cite{Hassell2008Eigenvalues-2} to prove Theorem \ref{thm:main}. Specifically, we will employ the Sturm oscillation theorem (see e.g. \cite{SimonSturm}) which was used originally in the works \cite{Wong1969OscillationCoefficients,Willett1969OnEquations} to establish the critical nature of oscillating potentials in one dimension.

\begin{proof}[Proof of Theorem \ref{thm:main}]
For the case where $\abs{\lambda} \leq \alpha\abs{d-1}/\sqrt{2}$, the result is immediate from Theorem \ref{thm:criticcoupl}. We henceforth fix $\abs{\lambda}>\alpha\abs{d-1}/\sqrt{2}$.

Following the approach in Theorem \ref{thm:criticcoupl} we can bound $-\Delta-V$ from below by the direct sum of operators $H_{\sigma,R}^{-}\oplus H_{\sigma,R}^{+}$ with Robin boundary conditions. Moreover, for any $\varepsilon>0$ we can select $R<\infty$ such that $N_0(H_{\sigma,R}^{-})<\infty$ and $H_{\sigma,R}^{+}$ can be bounded below by the direct sum of 
\begin{equation*}
        \bigoplus_{k=1}\left(-r^{-\lambda^2/\alpha^2}\frac{\mathrm{d}}{\mathrm{d}r}r^{\lambda^2/\alpha^2}\frac{\mathrm{d}}{\mathrm{d}r}+\frac{4\Lambda_k+(d-1)(d-3)}{4r^2}-\frac{\lambda^4}{4\alpha^4 r^2}-\varepsilon(r\ln r)^{-2}\right), 
    \end{equation*}
each in $L^2((R,\infty),r^{\lambda^2/\alpha^2}\dd r)$ with certain Robin boundary conditions. Note that eventually there is a $K\in \N$ for which $4\Lambda_k+(d-1)(d-3)\geq \lambda^4/\alpha^4$ for all $k> K$. Therefore, by Lemma \ref{lem:robinhardylog} only the first $K$ operators in the direct sum produce any negative eigenvalues.

For each of these operators we can use a variational trick and swap out the Robin boundary conditions for Dirichlet conditions. This only amounts to a rank-one change (see for instance \cite{SimonTrace}). Denoting these operators by $h^{(k)}_{R}$ we have that
\begin{equation*}
    N_E(V)\leq N_0(H^{-}_{\sigma,R})+ \sum_{k=1}^{K}N_E(h^{(k)}_{R})+K.
\end{equation*}

To calculate the asymptotic behaviour of each $N_E(h^{(k)}_{R})$ as $E\downarrow 0$ we use the Sturm oscillation theorem. Fixing $k$, $N_E(h^{(k)}_{R})$ coincides with the number of zeroes of any $u$ satisfying
\begin{equation*}\label{eqn:sturmproofthm1}
      h^{(k)}_{R}u(r)=-Eu(r) \text{ for  } r>R. 
\end{equation*}
Introducing variables $\rho=\lambda^2/\alpha^2$ and $\eta_k=\left(2\rho-(d-2)^2-4\Lambda_k\right)/4>0$, and taking $v(r)\coloneqq r^{(\rho-1)/2}u(r)$ the equation transforms to
\begin{align*}
    - r^2 v''(r)- r v'(r)-\left(\eta_k-Er^2+\varepsilon(\ln r)^{-2} \right)v(r) =0.
\end{align*}
Finally, taking $\widetilde{v}(\sqrt{\eta_k}t)=v(e^{t})$ this changes to 
\begin{align*}
    - \widetilde{v}''(t)-Q(t)\widetilde{v}(t) =0 \text{ for }t>\ln R,
\end{align*}
where $Q(t)=\eta_k-E e^{2t}+\varepsilon/t^2$.

The zeros of the solution $\widetilde{v}$ are then determined by using Pr\"ufer variables, namely $\theta(t)$, defined as 
\begin{equation*}
    \tan \theta(t)=\sqrt{\eta_k} \frac{\widetilde{v}(t)}{\widetilde{v}^\prime(t)}.
\end{equation*}
Combining with the equation above we see that 
\begin{equation*}
    (\sec\theta(t))^2\theta'(t)=\sqrt{\eta_k}-\sqrt{\eta_k}\frac{\widetilde{v}(t)\widetilde{v}^{\prime\prime}(t)}{(v'(t))^2}=\sqrt{\eta_k}+\sqrt{\eta_k} Q(t)\frac{\widetilde{v}(t)^2}{(\widetilde{v}'(t))^2},
\end{equation*}
and thus by definition of $\theta$,
\begin{equation*}
    \theta'(t)=Q(t)/\sqrt{\eta_k}+\sqrt{\eta_k}(1-Q(t)/\eta_k)\cos(\theta(t))^2.
\end{equation*}
Given Dirichlet conditions at $t=\ln R$ we take $\theta(\ln R)=0$. Then, whenever $\widetilde{v}(t)=0$ it follows that $\theta^\prime(t)=1$. Thus the number of zeroes of $\widetilde{v}$ in any interval $[a,b]$ is given by $(\theta(b)-\theta(a))/\pi$.

Notice that if $Q(t)< 0$ for all $t>t_0$ then $v(t)$ is no longer oscillatory. If $v$ has a local minimum or maximum at some $\tau> t_0$, then $\cos(\theta(\tau))=0$ and $\theta'(\tau)=Q(\tau)/\sqrt{\eta_k}<0$. Thus there can only be one additional zero past $t_0$,  since $\theta$ cannot move beyond $\tau$.  Therefore we need only calculate $t_0$ and the zeroes in $[\ln(R),t_0]$.

Solving $Q(t_0)=0$ leads to 
\begin{equation*}
    t_0=\frac{1}{2} \abs{\ln E}+O(1)\text{ as }E\downarrow 0.
\end{equation*}
Then the number of zeroes of $\widetilde v$ is equal to $(\theta(t_0)-\theta(\ln R))/\pi+O(1)$ and
\begin{align}\label{eqn:zeroestheta}
    \theta(t_0)-\theta(\ln R)&=\int_{\ln R}^{t_0}\theta^\prime(s)\dd s=\int_{\ln R}^{t_0} \sqrt{\eta_k}\dd s+O(1)=\frac{\sqrt{\eta}}{2} \abs{\ln E}+O(1).
\end{align}
Where we have used that as $E\downarrow 0$, 
\begin{align*}
    \int_{\ln R}^{t_0}Ee^{2t}\dd s\leq \frac{Ee^{2t_0}}{2}\lesssim 1,
\end{align*}
which in addition with other components of $\theta^\prime$, e.g. from the term $\varepsilon/t^2$, leads to the $O(1)$ remainder in \eqref{eqn:zeroestheta}.

Then we have shown that
\begin{equation*}
    N_E(h_R^{(k)})=\frac{\sqrt{\eta_k}}{2\pi}\abs{\ln E}=(2\pi)^{-1}\abs{\ln E}\sqrt{\frac{\lambda^2}{2\alpha^2}-\frac{(d-2)^2}{4}-\Lambda_k}+O(1) \text{ as }E\downarrow 0
\end{equation*}
which leads, by definition of $K$, to the desired sum 
\begin{equation*}\label{eqn:NEVasmp}
    N_E(V)\leq (2\pi)^{-1}\abs{\ln E}\sum_{k=1}\sqrt{\left(\frac{\lambda^2}{2\alpha^2}-\frac{(d-2)^2}{4}-\Lambda_k\right)_+}+O(1) \text{ as }E\downarrow 0.
\end{equation*}

To find the identical lower bound we can apply exactly the same argument starting with a direct sum of Dirichlet operators like in the proof of Theorem \ref{eqn:mainthmform}. Then the only difference in calculating the eigenvalues of each component is the sign of the error $\varepsilon(r \ln r)^{-2}$. Since this doesn't affect the subsequent analysis we arrive precisely at the desired result.
\end{proof}

We finish by proving our statement regarding the negative eigenvalues which accumulate at zero in the critical case. 
\begin{proof}[Proof of Corollary \ref{cor:main}]
    Let $M$ denote the coefficient of $\abs{\ln E}$ in the formula \eqref{eqn:mainthmform}. The negative eigenvalues $\abs{\lambda_k}\rightarrow 0$ as $k\rightarrow \infty$ and, by definition, $N_{\abs{\lambda_k}}(V)=k-1$. Therefore it follows from the formula \eqref{eqn:mainthmform} that there exists $K\geq 1$ and $C<\infty$ such that
    \begin{equation*}
        \abs{(k-1)+M\ln \abs{\lambda_k}}\leq C \text{ for all }k\geq K.
    \end{equation*}
    Thus it follows that 
    \begin{equation*}
       \exp\left(-\frac{k}{M}\right)\exp\left(\frac{1-C}{M}\right)\leq \abs{\lambda_k}\leq \exp\left(-\frac{k}{M}\right)\exp\left(\frac{1+C}{M}\right) \text{ as }k\rightarrow \infty,
    \end{equation*}
    which concludes the result.
\end{proof}

\subsection*{Acknowledgements}
This work was funded by the Deutsche Forschungsgemeinschaft (DFG) project TRR 352 – Project-ID 470903074. The author is grateful to Rupert L. Frank for his guidance and for the introduction to this problem.

\end{document}